\crefname{hypothesis}{Hypothesis}{Hypotheses}
\title{Convergence rate analysis of  the gradient
descent-ascent method for convex-concave saddle-point problems\thanks{This work was supported by the Dutch Scientific Council (NWO) grant OCENW.GROOT.2019.015, Optimization for and with Machine Learning (OPTIMAL).}}
\author{Moslem Zamani \thanks{Tilburg University, Department of Econometrics and Operations Research, Tilburg, The Netherlands (\email{m.zamani\_1@tilburguniversity.edu, h.abbaszadehpeivasti@tilburguniversity.edu, e.deklerk@tilburguniversity.edu}).}
\and Hadi Abbaszadehpeivasti\footnotemark[2]
\and Etienne de Klerk \footnotemark[2]}
\DeclareMathOperator{\st}{s.t.}
\newcommand{\vertiii}[1]{{\left\vert\kern-0.25ex\left\vert\kern-0.25ex\left\vert #1
    \right\vert\kern-0.25ex\right\vert\kern-0.25ex\right\vert}}
\begin{document}

\maketitle

\begin{abstract}
In this paper, we study the  gradient
descent-ascent  method for convex-concave saddle-point problems. We derive a new non-asymptotic global convergence rate in terms of distance to the solution set  by using the semidefinite programming performance estimation method. The given convergence rate incorporates most parameters of the problem and it is exact for a large class of strongly convex-strongly concave saddle-point problems for one iteration.  We also investigate the algorithm without strong convexity and we provide some necessary and sufficient conditions under which the gradient descent-ascent enjoys linear convergence.
\end{abstract}

\begin{keywords}
  Saddle-point problems, Minimax optimization problem, Gradient
descent-ascent  method, Convergence rate, Performance estimation, Semidefinite programming
\end{keywords}

\begin{AMS}
90C25, 90C47, 65K10
\end{AMS}
\section{Introduction}\label{intro}

We consider the convex-concave saddle point problem
\begin{align}\label{P}
\min_{x\in \mathbb{R}^n}\max_{y\in \mathbb{R}^m} F(x, y),
\end{align}
where $F:\mathbb{R}^n\times\mathbb{R}^m\to (-\infty, \infty)$, and $F(\cdot, y)$ and $F(x, \cdot)$ are convex and  concave, respectively, for any fixed $x\in\mathbb{R}^n$ and $y\in\mathbb{R}^m$.  We assume that problem \eqref{P} has some solution, that is, there exists $(x^\star, y^\star)\in\mathbb{R}^n\times\mathbb{R}^m$ with
$$
F(x^\star, y)\leq F(x^\star, y^\star)\leq F(x, y^\star), \ \ \forall x\in\mathbb{R}^n, \forall y\in\mathbb{R}^m.
$$
We denote the solution set of problem \eqref{P} with $S^\star$. We call $F$ smooth if for some $L_x, L_y, L_{xy}$, we have
\begin{align*}
& i) &&\|\nabla_x F(x_2, y)-\nabla_x F(x_1, y)\|\leq L_x\|x_2-x_1\| &&& \forall x_1, x_2, y \\
& ii)  &&\|\nabla_y F(x, y_2)-\nabla_y F(x, y_1)\|\leq L_y\|y_2-y_1\| &&& \forall x, y_1, y_2 \\
& iii) &&\|\nabla_x F(x, y_2)-\nabla_x F(x, y_1)\|\leq L_{xy}\|y_2-y_1\| &&& \forall x, y_1, y_2 \\
& iv) &&\|\nabla_y F(x_2, y)-\nabla_y F(x_1, y)\|\leq L_{xy}\|x_2-x_1\| &&& \forall x_1, x_2, y.
\end{align*}
The function $F$ is said to be strongly convex-strongly concave if
\begin{align*}
& i) \  F(\cdot, y)-\tfrac{\mu_x}{2}\|\cdot\|^2 \textrm{\ is convex for any fixed}\ y\\
& ii) \ F(x,\cdot)+\tfrac{\mu_y}{2}\|\cdot\|^2 \textrm{\ is concave for any fixed}\ x,
\end{align*}
for some $\mu_x, \mu_y>0$.
Note that strong convex-strong concavity implies that problem \eqref{P} has a unique solution $(x^\star, y^\star)$. We denote the set of smooth strongly convex-strongly concave functions by $\mathcal{F}(L_x, L_y, L_{xy}, \mu_x, \mu_y)$.

Problem \eqref{P} has applications in game theory \cite{bacsar1998dynamic}, robust optimization \cite{ben2009robust}, adversarial training \cite{goodfellow2014generative}, and reinforcement learning \cite{yang2020off}, to name but a few. Moreover, various algorithms have been developed for solving saddle point problems; see e.g. \cite{hamedani2021primal, jiang2022generalized, lin2020near, nie2022saddle, simpson2019input, wang2020improved}.

One of the simplest approaches for handling problem \eqref{P} introduced in \cite[Chapter 6]{arrow1958studies} is the gradient-descent-ascent method,  which may be regarded as a generalization of the gradient method for saddle point problems. The gradient descent-ascent method is described in Algorithm \ref{GDA}.

\begin{algorithm}
\caption{The gradient descent-ascent method}
\label{GDA}
\begin{algorithmic}
\STATE Set $N$ and $t>0$ (step length), pick $x^1$ and $y^1$.
\STATE For $k=1, 2, \ldots, N$ perform the following steps:\\
\begin{enumerate}
\item
$x^{k+1}=x^k-t\nabla_x F(x^k, y^k)$.
\item
$y^{k+1}=y^k+t\nabla_y F(x^k, y^k)$.
\end{enumerate}
\end{algorithmic}
\end{algorithm}

The local and global linear convergence of Algorithm \ref{GDA} have been investigated in the literature; see \cite{fallah2020optimal, liang2019interaction,  zhang2022near} and the references therein. As we investigate the global linear convergence rate of Algorithm \ref{GDA}, we mention one known global convergence result, which is derived by using variational inequality techniques. Suppose that $z=(x,y)$. Let the function $\phi:\mathbb{R}^{n+m}\to \mathbb{R}^{n+m}$ given by
$\phi(z)=\begin{pmatrix}
\nabla_x F(z) & -\nabla_y F(z)
\end{pmatrix}^T$. It is shown that, see e.g. \cite{mokhtari2020convergence},
\begin{align*}
 & \|\phi(\bar z)-\phi(\hat z)\|\leq 2L\|\bar z-\hat z\|,  \\
 & \langle \phi(\bar z)- \phi(\hat z), \bar z- \hat z\rangle \geq \mu\|\bar z-\hat z\|^2,
\end{align*}
where $L=\max\{L_x, L_y, L_{xy}\}$ and $\mu=\min\{\mu_x, \mu_y\}$. Indeed, $\phi$ is Lipschitz continuous and strongly monotone. By \cite[Theorem 12.1.2]{facchinei2003finite}, for $t\in(0, \tfrac{\mu}{2L^2})$, we have
\begin{align}\label{V.GDA}
\|x^2-x^\star\|^2+\|y^2-y^\star\|^2\leq
(1+4L^2t^2-2\mu t)\left(\|x^1-x^\star\|^2+\|y^1-y^\star\|^2\right).
\end{align}

In this study, we revisit Algorithm \ref{GDA} and improve the convergence rate \eqref{V.GDA}. Indeed, we  derive a new convergence rate involving most parameters of problem \eqref{P}. It is worth noting that if one sets $L=\max\{L_x, L_y, L_{xy}\}$ and $\mu=\min\{\mu_x, \mu_y\}$, the new bound dominates the convergence rate \eqref{V.GDA} for any step length $t\in\left(0, \tfrac{\mu}{2L^2}\right)$. Furthermore, by setting $t=\tfrac{\mu}{4L^2}$, one can infer that Algorithm \ref{GDA} has a complexity of
$\mathcal{O}\left(\tfrac{L^2}{\mu^2}\ln \left(\tfrac{1}{\epsilon} \right)\right)$, which is the known iteration complexity bound in the literature; see e.g.  \cite{beznosikov2022smooth, zhang2021unified}. In this study, thanks to the new convergence rate given in Theorem \ref{T_GDA},
the order of complexity of $\mathcal{O}\left(\left(\tfrac{L}{\mu}+\tfrac{L_{xy}^2}{\mu^2}\right)\ln \left(\tfrac{1}{\epsilon} \right)\right)$ is obtained
when $L=\max\{L_x, L_y\}$ and $\mu=\min\{\mu_x, \mu_y\}$, which is more informative in comparison with the above-mentioned one. Moreover, by providing some example, we show that the given convergence rate is exact for one iteration.

The paper is organized as follows. First, we present basic definitions and preliminaries used to establish the results. Section \ref{Sec.1} is devoted to the study of the linear convergence of Algorithm \ref{GDA}. In Section \ref{Sec.2}, we study the linear convergence of the gradient descent-ascent method without strong convexity. Indeed, we let $F\in\mathcal{F}(L_x, L_y, L_{xy}, 0, 0)$ and give some necessary and sufficient conditions for the linear convergence. Moreover, we derive a convergence rate under this setting.

\subsubsection*{Notation}
The $n$-dimensional Euclidean space is denoted by $\mathbb{R}^n$.
 We use $\langle \cdot, \cdot\rangle$ and $\| \cdot\|$ to denote the Euclidean inner product and norm, respectively.
  For a matrix $A$, $A_{ij}$ denotes its $(i, j)$-th entry,
  and $A^T$ represents the transpose of $A$.  We use $\lambda_{\max} (A)$ and $\lambda_{\min} (A)$ to denote  the largest and the smallest eigenvalue of symmetric matrix $A$, respectively.

Let $X\subseteq \mathbb{R}^n$. We denote the distance function to $X$ by $d_X(x):=\inf_{\bar x\in X} \|x-\bar x\|$ and the set-valued mapping $\Pi_{X} (x)$ stands for the projection of $x$ on $X$, i.e., $\Pi_{X} (x):=\{y\in X: \|x-y\|=d_X(x)\}$.

 We call a differentiable function $f:\mathbb{R}^n\to(-\infty, \infty)$ $L$-smooth if
$$
\|\nabla f(x_1)-\nabla f(x_2)\|\leq L\|x_1-x_2\| \ \ \forall  x_1, x_2\in\mathbb{R}^n.
$$
The function $f:\mathbb{R}^n\to\mathbb{R}$ is called $\mu$-strongly convex function if the function $x \mapsto f(x)-\tfrac{\mu}{2}\| x\|^2$ is convex.
 Clearly, any convex function is $0$-strongly convex.
  We denote the set of real-valued convex functions which are $L$-smooth and $\mu$-strongly convex by $\mathcal{F}_{\mu,L}(\mathbb{R}^n)$.

Let $\mathcal{I}$ be a finite index set and let $\left\{(x^i; g^i; f^i)\right\}_{i\in \mathcal{I}}\subseteq \mathbb{R}^n\times \mathbb{R}^n\times \mathbb{R}$.
A set $\left\{(x^i; g^i; f^i)\right\}_{i\in \mathcal{I}}$ is called $\mathcal{F}_{\mu,L}$-interpolable if there exists $f\in\mathcal{F}_{\mu,L}(\mathbb{R}^n)$
 with
$$
f(x^i)=f^i, \ g^i\in\partial f(x^i) \ \ i\in\mathcal{I}.
$$
The next theorem gives necessary and sufficient conditions for $\mathcal{F}_{\mu,L}$-interpolablity.

\begin{theorem}\cite[Theorem 4]{taylor2017smooth}\label{T1}
Let $L\in (0, \infty)$ and $\mu\in [0, \infty)$ and let $\mathcal{I}$ be a finite index set. The set
$\left\{(x^i; g^i; f^i)\right\}_{i\in \mathcal{I}}\subseteq \mathbb{R}^n\times \mathbb{R}^n \times \mathbb{R}$ is $\mathcal{F}_{\mu,L}$-interpolable if and only if for any $i, j\in\mathcal{I}$, we have
{\small{
\begin{align}\label{interp.1}
\tfrac{1}{2(1-\tfrac{\mu}{L})}\left(\tfrac{1}{L}\left\|g^i-g^j\right\|^2+\mu\left\|x^i-x^j\right\|^2-\tfrac{2\mu}{L}\left\langle g^j-g^i,x^j-x^i\right\rangle\right)\leq f^i-f^j-\left\langle g^j, x^i-x^j\right\rangle.
\end{align}
}}
\end{theorem}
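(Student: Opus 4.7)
The plan is to prove the two implications separately, handling sufficiency by reducing to the convex smooth case and then constructing an explicit interpolant via Fenchel duality.

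For necessity, recall the classical refined inequality valid for any $f\in\mathcal{F}_{\mu,L}(\mathbb{R}^n)$ and any $x,y\in\mathbb{R}^n$:
\begin{align*}
f(x)\geq f(y)+\langle \nabla f(y),x-y\rangle+\tfrac{1}{2(1-\mu/L)}\left(\tfrac{1}{L}\|\nabla f(x)-\nabla f(y)\|^2+\mu\|x-y\|^2-\tfrac{2\mu}{L}\langle \nabla f(y)-\nabla f(x),y-x\rangle\right),
\end{align*}
obtained by combining $L$-smoothness with $\mu$-strong convexity. Plugging $(x,y)=(x^i,x^j)$ and using $f^i=f(x^i)$, $g^i=\nabla f(x^i)$ then yields \eqref{interp.1}.

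For sufficiency, I would first reduce to $\mu=0$ via the shift $\tilde f^i:=f^i-\tfrac{\mu}{2}\|x^i\|^2$, $\tilde g^i:=g^i-\mu x^i$, which corresponds to seeking $\tilde f=f-\tfrac{\mu}{2}\|\cdot\|^2\in\mathcal{F}_{0,L-\mu}(\mathbb{R}^n)$. Expanding the quadratic and bilinear terms in \eqref{interp.1} and simplifying shows the condition is equivalent to
$$
\tfrac{1}{2(L-\mu)}\|\tilde g^i-\tilde g^j\|^2\leq \tilde f^i-\tilde f^j-\langle \tilde g^j,x^i-x^j\rangle, \qquad \forall i,j\in\mathcal{I},
$$
i.e., the plain convex smooth interpolation condition with constant $L':=L-\mu$. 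Once the interpolant is built in this case, adding $\tfrac{\mu}{2}\|\cdot\|^2$ back recovers the required element of $\mathcal{F}_{\mu,L}(\mathbb{R}^n)$.

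For the convex smooth case I would build the target interpolant through its Fenchel conjugate. Define
$$
\phi(y)=\max_{i\in\mathcal{I}}\left\{\langle x^i,y\rangle-\tilde f^i+\tfrac{1}{2L'}\|y-\tilde g^i\|^2\right\}.
$$
As a pointwise maximum of $\tfrac{1}{L'}$-strongly convex quadratics, $\phi$ is $\tfrac{1}{L'}$-strongly convex, so its conjugate $\phi^*$ lies in $\mathcal{F}_{0,L'}(\mathbb{R}^n)$. The main obstacle, and the key place where the hypothesis is used, is verifying that at each base point $y=\tilde g^i$ the $i$-th branch attains the maximum with value $\langle x^i,\tilde g^i\rangle-\tilde f^i$ and subgradient $x^i$; after cancellation, this domination is exactly the pairwise inequality displayed above. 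Fenchel duality then converts these equalities into $\phi^*(x^i)=\tilde f^i$ and $\nabla\phi^*(x^i)=\tilde g^i$, so $\phi^*$ interpolates the reduced data and completes the proof.
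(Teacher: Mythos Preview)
The paper does not supply its own proof of this theorem: it is quoted verbatim from \cite[Theorem~4]{taylor2017smooth} and used as a black box, so there is no in-paper argument to compare against. Your proposal is correct and is essentially the proof given in the cited reference: reduce to the case $\mu=0$ via the shift $f\mapsto f-\tfrac{\mu}{2}\|\cdot\|^2$ (your algebra for the equivalence of \eqref{interp.1} with the $(L-\mu)$-smooth convex condition on the shifted data is right), and then build the interpolant through conjugacy of a pointwise maximum of strongly convex quadratics. The only point worth tightening is the necessity direction: rather than invoking the displayed inequality as ``classical,'' it is cleaner to derive it by applying the standard descent lemma $h(x)\ge h(y)+\langle\nabla h(y),x-y\rangle+\tfrac{1}{2(L-\mu)}\|\nabla h(x)-\nabla h(y)\|^2$ to $h=f-\tfrac{\mu}{2}\|\cdot\|^2\in\mathcal{F}_{0,L-\mu}$ and substituting back, since otherwise the argument looks circular.
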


It is worth mentioning that, under the assumptions of Theorem \ref{T1}, the set $\left\{(x^i; g^i; f^i)\right\}_{i\in \mathcal{I}}$ is interpolable with an $L$-smooth $\mu$-strongly concave function if and only if for any $i, j\in\mathcal{I}$, we have

{\small{
\begin{align}\label{interp.2}
\tfrac{1}{2(1-\tfrac{\mu}{L})}\left(\tfrac{1}{L}\left\|g^i-g^j\right\|^2+\mu\left\|x^i-x^j\right\|^2+\tfrac{2\mu}{L}\left\langle g^j-g^i,x^j-x^i\right\rangle\right)\leq -f^i+f^j+\left\langle g^j, x^i-x^j\right\rangle.
\end{align}
}}

\section{The gradient descent-ascent method}\label{Sec.1}

In this section, we study  the convergence rate of gradient descent-ascent method when
 $F\in\mathcal{F}(L_x, L_y, L_{xy}, \mu_x, \mu_y)$ with $\min\{\mu_x, \mu_y\}>0$.
Indeed, we investigate the worst-case behavior of one step of  Algorithm \ref{GDA} in terms of distance to the unique saddle point $(x^\star, y^\star)$. The worst-cast convergence rate of Algorithm \ref{GDA} may be computed by solving the following abstract optimization problem,
\begin{align}\label{P1}
\nonumber   \max & \ \frac{\|x^2-x^\star\|^2+\|y^2-y^\star\|^2}{\|x^1-x^\star\|^2+\|y^1-y^\star\|^2}\\
\st   &  \  (x^2, y^2) \ \textrm{is generated  by Algorithm \ref{GDA} w.r.t.}\ F, x^1, y^1  \\
\nonumber  & \ (x^\star, y^\star) \textrm{\ is the unique saddle point of problem \eqref{P}} \\
\nonumber  & \ F\in\mathcal{F}(L_x, L_y, L_{xy}, \mu_x, \mu_y) \\
\nonumber  & \ x^1\in\mathbb{R}^n, y^1\in\mathbb{R}^m.
\end{align}
In problem \eqref{P1}, $F, x^1, x^2, x^\star, y^1, y^2, y^\star$ are decision variables and $\mu_x, L_x, \mu_y, L_y, L_{xy}, t$ are fixed parameters. To handle problem \eqref{P1}, we employ performance estimation method introduced in \cite{drori2014performance}.

Suppose that
\begin{align*}
 & F^{i,j}=F(x^i, y^j) \ \ \ \ \ \ \ \ && i,j\in\{1, 2, \star\}, \\
 & G_x^{i,j}=\nabla_x F(x^i, y^j) \ \ \ \ \ \ \ \ && i,j\in\{1, 2, \star\}, \\
 & G_y^{i,j}=\nabla_y F(x^i, y^j)  \ \ \ \ \ \ \ \ && i,j\in\{1, 2, \star\}. \\
\end{align*}
Note that due to the  the necessary and sufficient conditions for convex-concave saddle point problems, we have
$$
G_x^{\star, \star}=0, \ \ \ \ \ G_y^{\star, \star}=0.
$$
By using Theorem \ref{T1}, problem \eqref{P1} may be relaxed as a finite dimensional optimization problem,
\begin{align}\label{P2}
\nonumber   \max & \ \frac{\|x^2-x^\star\|^2+\|y^2-y^\star\|^2}{\|x^1-x^\star\|^2+\|y^1-y^\star\|^2}\\
 \nonumber \st   &  \  \{(x^1; G_x^{1,k}; F^{1,k}),(x^2; G_x^{2,k}; F^{2,k}),(x^\star;  G_x^{\star,k}; F^{\star,k})\}  \ \textrm{satisty \eqref{interp.1} for}\\
  \nonumber    &\ \ \ \ \  \textrm{ $k\in\{1,2, \star\}$ w.r.t.}\ \mu_x, L_x \\
   \nonumber    &  \  \{(y^1; G_y^{k,1}; F^{k,1}),(y^2; G_y^{k,2}; F^{k,2}),(y^\star;  G_y^{k,\star}; F^{k,\star})\}  \ \textrm{satisty \eqref{interp.2} for}\\
     &\ \ \ \ \  \textrm{ $k\in\{1,2, \star\}$ w.r.t.}\ \mu_y, L_y \\
  \nonumber & \|G_x^{k,i}-G_x^{k,j}\|\leq L_{xy}\|y^i-y^j\|, \ \ \ i,j,k\in\{1, 2, \star\}\\
   \nonumber & \|G_y^{i,k}-G_y^{j,k}\|\leq L_{xy}\|x^i-x^j\|, \ \ \ i,j,k\in\{1, 2, \star\}\\
  \nonumber   & \ x^2=x^1-tG_x^{1,1}\\
  \nonumber & \ y^2=y^1+tG_y^{1,1},\\
\nonumber &   G_x^{\star, \star}=0, \ G_y^{\star, \star}=0.
\end{align}
In problem \eqref{P2}, $\{(x^i; G_x^{i,j}; F^{i,j})\}$ and $\{(y^i; G_y^{j,i}; F^{j,i})\}$ ($i,j\in\{1, 2, \star\}$) are decision variables. We may assume that $x^\star=0$ and $y^\star=0$ as Algorithm \ref{GDA} is invariant under translation.  By elimination, problem \eqref{P2} may be reformulated as follows,
\begin{align}\label{P3}
\nonumber   \max & \ \frac{\|x^1-tG_x^{1,1}\|^2+\|y^1+tG_y^{1,1}\|^2}{\|x^1\|^2+\|y^1\|^2}\\
 \nonumber \st   &  \  \{(x^1; G_x^{1,k}; F^{1,k}),(x^1-tG_x^{1,1}; G_x^{2,k}; F^{2,k}),(0;  G_x^{\star,k}; F^{\star,k})\}  \ \textrm{satisty \eqref{interp.1} for}\\
  \nonumber    &\ \ \ \ \  \textrm{ $k\in\{1,2, \star\}$ w.r.t.}\ \mu_x, L_x \\
   \nonumber    &  \  \{(y^1; G_y^{k,1}; F^{k,1}),(y^1+tG_y^{1,1}; G_y^{k,2}; F^{k,2}),(0;  G_y^{k,\star}; F^{k,\star})\}  \ \textrm{satisty \eqref{interp.2} for}\\
     &\ \ \ \ \  \textrm{ $k\in\{1,2, \star\}$ w.r.t.}\ \mu_y, L_y \\
  \nonumber & \|G_x^{k,i}-G_x^{k,j}\|\leq L_{xy}\|y^i-y^j\|, \ \ \ i,j,k\in\{1, 2, \star\}\\
   \nonumber & \|G_y^{i,k}-G_y^{j,k}\|\leq L_{xy}\|x^i-x^j\|, \ \ \ i,j,k\in\{1, 2, \star\}\\
\nonumber &   G_x^{\star, \star}=0, \ G_y^{\star, \star}=0.
\end{align}

To handle problem \eqref{P3}, we formulate a semi-definite program by using the Gram matrix of the unknown vectors in the problem. Indeed, we form the Gram matrices $X$ and $Y$ corresponding to $\{(x^i; G_x^{i,j})\}$ and $\{(y^i; G_y^{j,i})\}$ ($i,j\in\{1, 2, \star\}$), respectively. The interested reader can refer to \cite{taylor2017smooth, zamani2022exact} for more details concerning the Gram matrix formulation.

 For the convenience of the analysis, we investigate the linear convergence of Algorithm \ref{GDA} in terms of  $L=\max\{L_x, L_y\}$ and  $\mu=\min\{\mu_x, \mu_y\}$.
Before we present the main theorem in this section, we need to present a lemma.
\begin{lemma}\label{Lemma1}
Let $0<\mu\leq L$, $c\geq 0$ and let $I=\left(0, \tfrac{2\mu}{\mu L+c^2}\right)$. Suppose that the function $u:I\to\mathbb{R}$ given by
$$
u(t)=\tfrac{1}{2}\left(L^2+\mu^2+2c^2\right)t^2-(L+\mu)t +\tfrac{1}{2}(L-\mu)t\sqrt{(Lt + \mu t - 2)^2 + 4c^2t^2}.
$$
Then $u$ is convex on $I$ and $u(I)\subseteq [-1, 0)$.
\end{lemma}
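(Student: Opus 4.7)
The plan is to find a compact rewriting of $u$ that makes both the convexity claim and the inclusion $u(I) \subseteq [-1,0)$ transparent. Set $s(t) := \sqrt{((L+\mu)t - 2)^2 + 4c^2 t^2}$, so that
\[u(t) = \tfrac12(L^2 + \mu^2 + 2c^2)t^2 - (L+\mu)t + \tfrac12(L-\mu)\,t\,s(t).\]
My claim is the identity $u(t) + 1 = \tfrac14 \bigl(s(t) + (L-\mu)t\bigr)^2$. This I would verify by expanding the right-hand side, using $s(t)^2 = \bigl((L+\mu)^2 + 4c^2\bigr)t^2 - 4(L+\mu)t + 4$ and regrouping via $\tfrac14\bigl((L+\mu)^2 + (L-\mu)^2\bigr) = \tfrac12(L^2 + \mu^2)$; every term then matches.

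Once this identity is in place, convexity is immediate: $s(t)$ is the Euclidean norm of the affine vector-valued map $t\mapsto ((L+\mu)t - 2,\,2ct)$, hence convex in $t$, and adding the linear term $(L-\mu)t$ preserves convexity. On $I$ we have $t > 0$ and $L \geq \mu$, so $h(t) := s(t) + (L-\mu)t$ is nonnegative; therefore $h^2$ is convex (the square of a nonnegative convex function), and so is $u = \tfrac14 h^2 - 1$.

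The lower bound $u(t) \geq -1$ is now immediate from $h(t)^2 \geq 0$. For the strict upper bound $u(t) < 0$ I would show the equivalent inequality $h(t) < 2$. First observe that $2 - (L-\mu)t > 0$ on $I$, since $\tfrac{2\mu}{\mu L + c^2} \leq \tfrac{2}{L-\mu}$ reduces to $-\mu^2 \leq c^2$ (the case $L=\mu$ being trivial). Under this positivity, $h(t) < 2$ is equivalent to $s(t)^2 < (2-(L-\mu)t)^2$; expanding both sides and cancelling the common $4$, $4L\mu t^2$, and $-4(L+\mu)t$-type contributions collapses the inequality to $(\mu L + c^2)\,t < 2\mu$, which is exactly the defining condition of $I$.

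The main obstacle I expect is guessing the identity $u + 1 = \tfrac14(s + (L-\mu)t)^2$; the brute-force route of showing $u'' \geq 0$ by direct differentiation yields a cubic in $t$ (essentially of the form $q(t)q'(t) + 16c^2 t$ with $q = s^2$) whose sign on $I$ is not transparent, whereas once the identity is spotted, both convexity and the range reduce to the one-line arguments above.
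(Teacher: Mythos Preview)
Your proof is correct and is genuinely different from the paper's. The paper does not spot the identity $u(t)+1=\tfrac14\bigl(s(t)+(L-\mu)t\bigr)^2$; instead, for convexity it writes $u(t)=\tfrac12 t\,v(t)-(L+\mu)t$ with $v(t)=(L^2+\mu^2+2c^2)t+(L-\mu)s(t)$, observes that $v$ is convex, positive, and increasing on $I$ (via $v'(0)>0$), and then uses that the product of two positive increasing convex functions is convex. For the range, the paper argues $u(t)<0$ by (strict) convexity together with $u(0)=u\bigl(\tfrac{2\mu}{\mu L+c^2}\bigr)=0$, and $u(t)\ge -1$ by the crude lower bound $u(t)\ge \tfrac12(L^2+\mu^2)t^2-(L+\mu)t\ge -\tfrac12\bigl(1+\tfrac{2L\mu}{L^2+\mu^2}\bigr)\ge -1$.

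Your identity-based route is shorter and more informative: both the lower bound and convexity become one-line consequences, and the upper bound reduces exactly to the defining inequality of $I$ rather than requiring the separate endpoint verification. The paper's argument, on the other hand, avoids the need to guess the perfect-square rewriting and uses only off-the-shelf convexity facts, at the cost of a slightly more ad hoc treatment of the range (and a tacit appeal to \emph{strict} convexity that it does not fully justify).
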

\begin{proof}
Consider the function $v:I\to\mathbb{R}$ given by
$$
v(t)=\left(L^2+\mu^2+2c^2\right)t+(L-\mu)\sqrt{(Lt + \mu t - 2)^2 + 4c^2t^2}.
$$
The function $v$ is convex and positive on $I$. By elementary calculus, one can show that $v^\prime(0)>0$. So $v$ is increasing on $I$ due to the convexity. As the product of positive monotone convex functions is a convex function, the function $t \mapsto tv(t)$ is also convex, which implies the convexity of $u$. Indeed, $u$ is strictly convex on $I$. Since strictly convex functions attain their maximum on endpoints of a given interval, $u(t)<\max\{u(0), u(\tfrac{2\mu}{\mu L+c^2})\}=0$ for $t\in I$. It remains to show that $\min_{t\in I} u(t)\geq -1$. This follows from the point that
$$
u(t)\geq \tfrac{1}{2}\left(L^2+\mu^2\right)t^2-(L+\mu)t\geq \tfrac{-1}{2}\left(1+\tfrac{2L\mu}{L^2+\mu^2}\right)\geq -1,
$$
and the proof is complete.
\end{proof}

In the next theorem, we get an upper bound for problem \eqref{P3} by using weak duality.

\begin{theorem}\label{T_GDA}
Let $F\in\mathcal{F}(L_x, L_y, L_{xy}, \mu_x, \mu_y)$. Suppose that $L=\max\{L_x, L_y\}$ and $\mu=\min\{\mu_x, \mu_y\}>0$. If  $t\in\left(0, \tfrac{2\mu}{\mu L+L_{xy}^2}\right)$, then Algorithm \ref{GDA} generates $(x^2, y^2)$ such that
\begin{align}\label{C_GDA}
\|x^2-x^\star\|^2+\|y^2-y^\star\|^2\leq
\alpha\left(\|x^1-x^\star\|^2+\|y^1-y^\star\|^2\right),
\end{align}
where
$$
\alpha=1+\tfrac{1}{2}\left(L^2+\mu^2+2L_{xy}^2\right)t^2-(L+\mu)t +\tfrac{1}{2}(L-\mu)t\sqrt{(Lt + \mu t - 2)^2 + 4L_{xy}^2t^2}.
$$
\end{theorem}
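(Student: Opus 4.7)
The plan is to prove Theorem~\ref{T_GDA} by weak duality applied to the SDP reformulation of problem~\eqref{P3}. Concretely, I will exhibit explicit nonnegative multipliers on (i) a subset of the $(\mu,L)$-interpolation inequalities~\eqref{interp.1} and~\eqref{interp.2}, and (ii) the cross-Lipschitz inequalities, whose weighted sum, after eliminating $x^2$ via $x^2 = x^1 - tG_x^{1,1}$, eliminating $y^2$ via $y^2 = y^1 + tG_y^{1,1}$, and setting $x^\star = y^\star = 0$, yields exactly
\[
\alpha\bigl(\|x^1\|^2+\|y^1\|^2\bigr) - \|x^2\|^2 - \|y^2\|^2 \;\geq\; 0.
\]

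First I would prune the SDP: since the objective only couples the two blocks through $G_x^{1,1}$ and $G_y^{1,1}$, I expect nonzero multipliers only on the pair of $x$-interpolation inequalities linking $(x^1,y^1)$ with $(x^\star,y^1)$, the analogous pair of $y$-interpolation inequalities linking $(x^1,y^1)$ with $(x^1,y^\star)$, and the two squared Lipschitz inequalities
\[
\|G_x^{1,1}-G_x^{1,\star}\|^2 \leq L_{xy}^2\|y^1\|^2, \qquad \|G_y^{1,1}-G_y^{\star,1}\|^2 \leq L_{xy}^2\|x^1\|^2.
\]
All other constraints in~\eqref{P3} would be assigned multiplier zero, which should still be enough because the gradients $G_x^{2,\cdot}$ and $G_y^{\cdot,2}$ do not enter either the objective or the algorithmic update.

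Second, I would form the certificate. With multipliers $\lambda$ on the interpolation pairs and $\gamma$ on the squared Lipschitz inequalities, the weighted difference becomes a quadratic form in the vectors $x^1, y^1, G_x^{1,1}, G_x^{1,\star}, G_y^{1,1}, G_y^{\star,1}$. By the full $x$-$y$ symmetry of the construction, this quadratic form decomposes into two identical blocks, each reducing to a single $2\times 2$ matrix that must be positive semidefinite. Optimizing the resulting $\alpha$ over $\lambda,\gamma$ subject to this PSD constraint is what introduces the square root in the definition of $\alpha$: the optimum is attained on the boundary where the $2\times 2$ determinant vanishes, and solving the corresponding quadratic equation produces the factor $\sqrt{(Lt + \mu t - 2)^2 + 4L_{xy}^2 t^2}$. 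Substituting the optimal $\lambda,\gamma$ back into the weighted sum collapses it, after some algebra, into the claimed inequality~\eqref{C_GDA}.

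Finally, I would verify nonnegativity of the chosen multipliers on the interval $t \in \left(0, \tfrac{2\mu}{\mu L + L_{xy}^2}\right)$ and invoke Lemma~\ref{Lemma1} with $c = L_{xy}$ to conclude that $\alpha - 1 = u(t) \in [-1, 0)$, so that~\eqref{C_GDA} represents a genuine contraction. The main obstacle I expect is the second step: identifying the optimal multipliers in closed form. The $(L-\mu)\sqrt{\cdot}$ structure of $\alpha$ indicates that the dual optimum sits at a rank-one boundary point of the $2\times 2$ PSD block, so algebraically the bottleneck is solving the associated quadratic for the multipliers while keeping them nonnegative over the entire admissible step-length range; once this is in hand, the rest is bookkeeping and an appeal to Lemma~\ref{Lemma1}.
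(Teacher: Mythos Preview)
Your overall strategy---weak duality on the SDP relaxation~\eqref{P3}, exhibiting nonnegative multipliers on interpolation and cross-Lipschitz inequalities, then invoking Lemma~\ref{Lemma1}---is exactly the paper's approach. The gap is in your pruning step.

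You keep only the $x$-interpolation pair at the slice $y=y^1$, the $y$-interpolation pair at the slice $x=x^1$, and two of the four cross-Lipschitz inequalities. The paper's certificate uses \emph{all eight} interpolation inequalities around the four-point cycle $(x^1,y^1)\to(x^\star,y^1)\to(x^\star,y^\star)\to(x^1,y^\star)\to(x^1,y^1)$ and \emph{all four} cross-Lipschitz bounds, with strictly positive multipliers $\gamma_1,\gamma_2,\gamma_3$ on every one of them. In particular $\gamma_1\neq\gamma_2$ (one checks $\gamma_1-\gamma_2=\tfrac{t((L+\mu)t-2)(\beta+(L-\mu)t)}{\beta}\neq 0$), and the function-value terms $F^{1,1},F^{\star,1},F^{1,\star},F^{\star,\star}$ cancel only because the full cycle is present. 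With your two ``arms'' alone, cancellation of $F^{\star,1}$ and $F^{1,\star}$ would force equal forward/backward multipliers, which is strictly suboptimal and cannot certify the tight~$\alpha$. Since the omitted constraints carry positive dual weight at the optimum, dropping them strictly enlarges the primal feasible set and raises the achievable ratio above the claimed~$\alpha$.

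Relatedly, your claimed $2\times 2$ block structure is too small. Your own constraint set already introduces both $G_x^{\star,1}$ (via $x$-interpolation at $y^1$) and $G_x^{1,\star}$ (via the Lipschitz bound), yet you list only $G_x^{1,\star}$ in the quadratic form. In the paper's certificate each block is a rank-two form in the four vectors $x^1,G_x^{1,1},G_x^{1,\star},G_x^{\star,1}$, written as $-\zeta_1\|x^1-\zeta_2 G_x^{1,1}-\zeta_3(G_x^{1,\star}-G_x^{\star,1})\|^2-\zeta_4\|G_x^{1,1}-G_x^{1,\star}-G_x^{\star,1}\|^2$; there is no reduction to a single $2\times 2$ determinant condition. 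So the plan is right in spirit but the concrete constraint selection and the block size both need to be enlarged before the algebra can close.
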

\begin{proof}
As mentioned earlier, we assume without loss of generality that $x^\star=0$ and $y^\star=0$. By the assumptions, $F(\cdot, y)\in\mathcal{F}_{\mu,L}(\mathbb{R}^n)$ and $F(x, \cdot)\in\mathcal{F}_{\mu,L}(\mathbb{R}^m)$ for any fixed $x, y$.  Without loss of generality, we may assume that $L_{xy}=1$. This follows from the point that Algorithm \ref{GDA} under the given assumptions generate the same point $(x^2, y^2)$ for the problem
\begin{align*}
\min_{x\in \mathbb{R}^n}\max_{y\in \mathbb{R}^m} \tfrac{1}{L_{xy}}F(x, y),
\end{align*}
with the step length $L_{xy}t$.
Let $t\in\left(0, \tfrac{2\mu}{\mu L+1}\right)$ and
\begin{align*}
 &\bar\alpha=1+\tfrac{1}{2}\left(L^2+\mu^2+2\right)t^2-(L+\mu)t +\tfrac{1}{2}(L-\mu)t\sqrt{(Lt + \mu t - 2)^2 + 4t^2},\\
  & \beta=\sqrt{(Lt + \mu t - 2)^2 + 4t^2}, \ \ \ \ \ \ \ \ \ \ \ \ \ \ \ \ \
   \gamma_1=\tfrac{t \left(-\beta+t \left(L \left(\beta+t (L+\mu)-3\right)-\mu+2 t\right)+2\right)}{\beta},\\
   &\gamma_2=\tfrac{t \left(t^2 (\mu (L+\mu)+2)-\mu t \beta+\beta-t (L+3 \mu)+2\right)}{\beta}, \ \ \ \gamma_3=\tfrac{t^2 \left(\beta+L t-\mu t\right)}{2 \beta}.
\end{align*}
It is  easily verified that $ \gamma_1, \gamma_2, \gamma_3\geq 0$. Moreover, Lemma \ref{Lemma1} implies that $\bar\alpha\in [0, 1)$. By doing some algebra, one can show that
{\small{
\begin{align*}
& \left\|x^1-tG_x^{1,1}\right\|^2+\left\|y^1+tG_y^{1,1}\right\|^2-
\bar\alpha\left(\left\|x^1\right\|^2+\left\|y^1\right\|^2\right)+\gamma_1\bigg(F^{1,1}-F^{\star,1}-\left\langle G_x^{\star,1},x^1\right\rangle-\\
  &\tfrac{L}{2\left(L-\mu\right)}\Big(\tfrac{1}{L}\left\|G_x^{1,1}-G_x^{\star,1}\right\|^2+\mu\left\|x^1\right\|^2-\tfrac{2\mu}{L}\left\langle G_x^{\star,1}-G_x^{1,1},-x^1\right\rangle\Big)\bigg)+\gamma_2\bigg(F^{\star,1}-F^{1,1}+\\
  &\left\langle G_x^{1,1},x^1\right\rangle-\tfrac{L}{2\left(L-\mu\right)}\Big(\tfrac{1}{L}\left\|G_x^{\star,1}-G_x^{1,1}\right\|^2+\mu\left\|x^1\right\|^2-\tfrac{2\mu}{L}\left\langle G_x^{1,1}-G_x^{\star,1},x^1\right\rangle\Big)\bigg)+\gamma_2\bigg(F^{1,\star}-\\
  & F^{\star,\star}-\tfrac{L}{2\left(L-\mu\right)}\Big(\tfrac{1}{L}\left\|G_x^{1,\star}\right\|^2+\mu\left\|x^1\right\|^2-\tfrac{2\mu}{L}\left\langle G_x^{1,\star},x^1\right\rangle\Big)\bigg)+\gamma_1\bigg(F^{\star,\star}-F^{1,\star}+\left\langle G_x^{1,\star},x^1\right\rangle-\\
  & \tfrac{L}{2\left(L-\mu\right)}\Big(\tfrac{1}{L}\left\|G_x^{1,\star}\right\|^2+\mu\left\|x^1\right\|^2-\tfrac{2\mu}{L}\left\langle G_x^{1,\star},x^1\right\rangle\Big)\bigg)+\gamma_1\bigg(F^{1,\star}-F^{1,1}+\left\langle G_y^{1,\star},y^1\right\rangle-\tfrac{L}{2\left(L-\mu\right)}\\
  & \Big(\tfrac{1}{L}\left\|G_y^{1,1}-G_y^{1,\star}\right\|^2+\mu\left\|y^1\right\|^2-\tfrac{2\mu}{L}\left\langle G_y^{1,\star}-G_y^{1,1},y^1\right\rangle\Big)\bigg)+\gamma_2\bigg(F^{1,1}-F^{1,\star}-\left\langle G_y^{1,1},y^1\right\rangle-\\
  &\tfrac{L}{2\left(L-\mu\right)}\Big(\tfrac{1}{L}\left\|G_y^{1,\star}-G_y^{1,1}\right\|^2+\mu\left\|y^1\right\|^2-\tfrac{2\mu}{L}\left\langle-G_y^{1,1}+G_y^{1,\star},y^1\right\rangle\Big)\bigg)+\gamma_2\bigg(-F^{\star,1}+F^{\star,\star}-\\
  & \tfrac{L}{2\left(L-\mu\right)}\Big(\tfrac{1}{L}\left\|G_y^{\star,1}\right\|^2+\mu\left\|y^1\right\|^2-\tfrac{2\mu}{L}\left\langle G_y^{\star,1},-y^1\right\rangle\Big)\bigg)+\gamma_1\bigg(-F^{\star,\star}+F^{\star,1}+\left\langle G_y^{\star,1},-y^1\right\rangle-\\
  &\tfrac{L}{2\left(L-\mu\right)}\Big(\tfrac{1}{L}\left\|G_y^{\star,1}\right\|^2+\mu\left\|y^1\right\|^2-\tfrac{2\mu}{L}\left\langle-G_y^{\star,1},y^1\right\rangle\Big)\bigg)+\gamma_3\left(\left\|x^1\right\|^2-\left\|G_y^{1,1}-G_y^{\star,1}\right\|^2\right)+\\
  & \gamma_3\left(\left\|x^1\right\|^2-\left\|G_y^{1,\star}\right\|^2\right)+\gamma_3\left(\left\|y^1\right\|^2-\left\|G_x^{1,1}-G_x^{1,\star}\right\|^2\right)+\gamma_3\left(\left\|y^1\right\|^2-\left\|G_x^{\star,1}\right\|^2\right)\\
  = & -\zeta_1\left\|x^1-\zeta_2 G_x^{1,1}-\zeta_3(G_x^{1,\star}-G_x^{\star,1})\right\|^2-\zeta_4\left\|G_x^{1,1}-G_x^{1,\star}-G_x^{\star,1}\right\|^2-\\
  & \zeta_1\left\|y^1+\zeta_2 G_y^{1,1}-\zeta_3(G_y^{1,\star}-G_y^{\star,1})\right\|^2-\zeta_4\left\|G_y^{1,1}-G_y^{\star,1}-G_y^{1,\star}\right\|^2\leq 0,
\end{align*}
}}
where  $\zeta_1, \zeta_2, \zeta_3, \zeta_4$ are
\begin{align*}
  & \zeta_1=\tfrac{1}{2} t \left(\tfrac{\left(L^2+\mu^2\right) \beta}{L-\mu}-\tfrac{2 t^2 (L-\mu)}{\beta}+(L+\mu) (t (L+\mu)-2)\right), \\
  & \zeta_2=-\tfrac{\left(L^2 t-L-\mu^2 t+\mu\right) \beta-L^2 t (L t+\mu t-3)-(L+\mu) \left(\mu^2 t^2-2 \mu t+2 t^2+2\right)+\mu^2 t}{2 t^2 (L+\mu)^2 (L \mu+1)-8 L \mu t (L+\mu)+8 L \mu}, \\
  & \zeta_3=-\tfrac{t \left(L^2+6 L \mu+\mu^2\right)-2 t^2 (L+\mu) (L \mu+1)-(L-\mu) \beta-2 (L+\mu)}{2 t^2 (L+\mu)^2 (L \mu+1)-8 L \mu t (L+\mu)+8 L \mu}, \\
  & \zeta_4=\tfrac{t \left(\beta+L t-\mu t\right)^2}{4 (L-\mu) \beta}.
\end{align*}
Note that $\zeta_1, \zeta_4\geq 0$. Therefore, any feasible solution of problem \eqref{P3} satisfies
\begin{align*}
   \frac{\left\|x^1-tG_x^{1,1}\right\|^2+\left\|y^1+tG_y^{1,1}\right\|^2}{\left\|x^1\right\|^2+\left\|y^1\right\|^2}\leq \bar\alpha.
\end{align*}
The proof will be complete by a suitable scaling.
\end{proof}

One may wonder how we obtained the convergence rate  in Theorem \ref{T_GDA}.
Consider the optimization problem
\begin{align}\label{QS2}
\min_{x\in \mathbb{R}^n} f(x),
\end{align}
where $f\in\mathcal{F}_{\mu, L}$. It is known that the quadratic function $q(x)=x^TQx$ with $\lambda_{\max}(Q)=L$ and $\lambda_{\min}(Q)=\mu$ attains  the worst-case convergence rate for the gradient method; see e.g. \cite{de2017worst}. We guessed that this property may hold for problem \eqref{P} and we investigated the bilinear saddle point problem
\begin{align}\label{bP}
\min_{x\in \mathbb{R}^2}\max_{y\in \mathbb{R}^2} \tfrac{1}{2}x^T\begin{pmatrix}
L_x & 0\\
0&\mu_x
\end{pmatrix}x+x^T\begin{pmatrix}
    0 & L_{xy} \\
    L_{xy} & 0
  \end{pmatrix}y-\tfrac{1}{2}y^T\begin{pmatrix}
L_y & 0\\
0&\mu_y
\end{pmatrix}y,
\end{align}
where $L_x\geq \mu_x>0$, $L_y\geq \mu_y>0$ and $L_{xy}$ are fixed parameters and we derived the worst case convergence of Algorithm \ref{GDA} with respect to this problem. Our numerical experiments showed that the derived convergence rate is the same as the optimal value of the semi-definite programming problem corresponding to problem \eqref{P3}. Moreover, as a by-product, we exhibit  that the convergence rate  \eqref{C_GDA} is exact for one iteration by using problem \eqref{bP}; see Proposition \ref{Pro.2}.

Theorem \ref{T_GDA} provides some new information concerning Algorithm \ref{GDA}. Firstly, Theorem \ref{T_GDA} improves the known convergence factor in the literature; see our discussion in Introduction. In addition, it investigates the  convergence rate for a step length in a larger interval. Secondly, it does not assume the second order continuous  differentiability of $F$, which is commonly used for deriving a local convergence rate; see \cite{liang2019interaction, mescheder2017numerics, zhang2022near}.  Finally, the given convergence rate incorporates three parameter $\mu=\min\{\mu_x, \mu_y\}$, $L=\max\{L_x, L_y\}$ and $L_{xy}$, which is more informative in comparison  with the results in the literature mostly given in terms of $\mu=\min\{\mu_x, \mu_y\}$ and $L=\max\{L_x, L_y, L_{xy}\}$; see \cite{mokhtari2020unified, zhang2022near, zhang2022lower} and references therein. Even though if one considers $L=\max\{L_x, L_y, L_{xy}\}$ and $\mu=\min\{\mu_x, \mu_y\}$, convergence rate \eqref{C_GDA} dominates \eqref{V.GDA}. This follows from that  for $t\in \left(0, \tfrac{\mu}{2L^2}\right)$, one has
{\small{
\begin{align*}
&\left(1+4L^2t^2-2\mu t\right)-\left(1+\tfrac{1}{2}\left(3L^2+\mu^2\right)t^2-(L+\mu)t +\tfrac{1}{2}(L-\mu)t\sqrt{(Lt + \mu t - 2)^2 + 4L^2t^2}\right)\\
& \geq (2L^2+L\mu-\mu^2)t^2\geq 2L^2t^2,
\end{align*}
}}
where the first inequality results from $\sqrt{(Lt + \mu t - 2)^2 + 4L^2t^2}\leq (2-Lt-\mu t)+2Lt$. In addition, in this case, the step length can take value in a larger interval as $\left(0, \tfrac{\mu}{2L^2}\right)\subseteq  \left(0, \tfrac{2\mu}{L(L+\mu)}\right)$.
Moreover, Conjecture \ref{Conj1} discusses the convergence rate in terms of  $L_x, L_y, L_{xy}, \mu_x, \mu_y$.

The next proposition gives the optimal step length with respect to the worst case convergence rate.
\begin{proposition}\label{Pro.1}
Let $F\in\mathcal{F}(L_x, L_y, L_{xy}, \mu_x, \mu_y)$. If $L=\max\{L_x, L_y\}$ and $\mu=\min\{\mu_x, \mu_y\}>0$,   then the optimal step length for Algorithm \ref{GDA} with respect to bound \eqref{C_GDA} is
\begin{align}\label{Opt_t}
t^\star=\tfrac{2\left((L+\mu)\sqrt{L_{xy}^2+L\mu}+L_{xy}(\mu-L)\right)}
{\left(4L_{xy}^2+(L+\mu)^2\right)\sqrt{L_{xy}^2+L\mu}}.
\end{align}
Moreover, the convergence rate with respect to $t^\star$ is
\begin{align}\label{Opt_al}
\alpha^\star=\tfrac{8 L_{xy} \left(L^2-\mu^2\right) \sqrt{L \mu+L_{xy}^2}+\left(L^2-\mu^2\right)^2+16 L_{xy}^2 \left(L \mu+L_{xy}^2\right)}{\left((L+\mu)^2+4 L_{xy}^2\right)^2}.
\end{align}
\end{proposition}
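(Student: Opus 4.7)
The plan is to reduce the optimization of the convergence bound to a one-dimensional convex minimization problem and then solve the first-order optimality condition in closed form. Write $\alpha(t) = 1 + u(t)$ where
\[
u(t) = \tfrac{1}{2}(L^2+\mu^2+2L_{xy}^2)t^2 - (L+\mu)t + \tfrac{1}{2}(L-\mu)\, t\, \beta(t),
\qquad \beta(t) = \sqrt{(Lt+\mu t - 2)^2 + 4L_{xy}^2 t^2}.
\]
By Lemma~\ref{Lemma1}, $u$ is strictly convex on the admissible interval $I = (0, \tfrac{2\mu}{\mu L + L_{xy}^2})$, so the minimizer of $\alpha$ on $I$ coincides with the unique critical point of $u$, determined by $u'(t)=0$.

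Next I would compute $u'(t)$ explicitly. Using $\beta(t)^2 = (L+\mu)^2 t^2 - 4(L+\mu)t + 4 + 4L_{xy}^2 t^2$, a direct differentiation gives
\[
\beta(t) + t\beta'(t) \;=\; \frac{2\bigl[(L+\mu)^2 t^2 + 4L_{xy}^2 t^2 - 3(L+\mu)t + 2\bigr]}{\beta(t)},
\]
and hence
\[
u'(t) \;=\; (L^2+\mu^2+2L_{xy}^2)t - (L+\mu) + \frac{(L-\mu)\bigl[(L+\mu)^2 t^2 + 4L_{xy}^2 t^2 - 3(L+\mu)t + 2\bigr]}{\beta(t)}.
\]
Setting $u'(t)=0$, isolating the irrational term, and squaring clears $\beta(t)$. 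The resulting polynomial equation can then be solved for $t$. The candidate $t^\star$ in \eqref{Opt_t} lies in $I$ (one checks $t^\star < \tfrac{2\mu}{\mu L + L_{xy}^2}$ using $\sqrt{L\mu + L_{xy}^2}\ge\sqrt{L\mu}$ and the identity $(L+\mu)\sqrt{L\mu+L_{xy}^2} + L_{xy}(\mu-L) > 0$), and by the uniqueness of the critical point it must be the minimizer.

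Finally, plugging $t^\star$ into $\alpha(t) = 1 + u(t)$ yields $\alpha^\star$. A convenient trick is to use the optimality relation to eliminate $\beta(t^\star)$ and express $\alpha^\star$ purely in terms of $L,\mu,L_{xy}$; alternatively, one substitutes the closed form for $\beta(t^\star)$ (which simplifies nicely in view of $L_{xy}^2 + L\mu$ appearing under the radical in \eqref{Opt_t}) and collects terms.

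The main obstacle is purely algebraic: solving $u'(t)=0$ after squaring leads to a quartic-type equation in $t$, and simplifying its root to the form \eqref{Opt_t} as well as then simplifying $u(t^\star)$ to \eqref{Opt_al} is bulky. In practice, I would carry out these manipulations with a computer algebra system and present only the verification that the stated $t^\star$ satisfies $u'(t^\star)=0$ and yields $\alpha(t^\star)=\alpha^\star$, since by strict convexity of $u$ these two identities pin down both the optimal step length and the optimal rate.
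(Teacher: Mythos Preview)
Your proposal is correct and follows essentially the same approach as the paper: invoke Lemma~\ref{Lemma1} for strict convexity of $\alpha$ on the admissible interval, then verify that $\alpha'(t^\star)=0$ (the paper simply writes ``by doing some algebra, one can verify that $\alpha'(t^\star)=0$''), whence $t^\star$ is the unique minimizer and $\alpha^\star=\alpha(t^\star)$. Your write-up is more detailed in sketching how the first-order condition could be solved and in checking $t^\star\in I$, but the underlying argument is the same.
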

\begin{proof}
Let $\alpha:\left[0, \tfrac{2\mu}{\mu L+L_{xy}^2}\right] \to\mathbb{R}$ given by
$$
\alpha(t)=1+\tfrac{1}{2}\left(L^2+\mu^2+2L_{xy}^2\right)t^2-(L+\mu)t +\tfrac{1}{2}(L-\mu)t\sqrt{(Lt + \mu t - 2)^2 + 4L_{xy}^2t^2}.
$$
By Lemma \ref{Lemma1}, $\alpha$ is a strictly convex function on its domain. By doing some algebra, one can verify that $\alpha^\prime (t^\star)=0$, which implies that $t^\star$ is the minimum.
\end{proof}

If $L_{xy}=0$, problem \eqref{P} reduces to a separable optimization problem. Indeed, the variables $x$ and $y$ are independent. Under this assumption, the optimal step length given by Proposition \ref{Pro.1} is $t^\star=\tfrac{2}{L+\mu}$, which is the well-known optimal step length  for the optimization problem
\begin{align*}
\min_{x\in \mathbb{R}^n} f(x),
\end{align*}
where $f\in\mathcal{F}_{\mu, L}$; see \cite[Theorem 2.1.15]{Nesterov}. Moreover, the convergence rate corresponding to $t^\star$ is $\alpha^\star=\left(\tfrac{L-\mu}{L+\mu}\right)^2$. By some algebra, one can show that under the assumptions of Proposition \eqref{Pro.1}, Algorithm \ref{GDA} has a complexity of
$\mathcal{O}\left(\left(\tfrac{L}{\mu}+\tfrac{L_{xy}^2}{\mu^2}\right)\ln \left(\tfrac{1}{\epsilon} \right)\right)$. Note that the lower iteration complexity bound for first order methods with $L=\max\{L_x, L_y\}$ and $\mu=\min\{\mu_x, \mu_y\}$ is $\Omega\left(\sqrt{\tfrac{L}{\mu}+\tfrac{L_{xy}^2}{\mu^2}}\ln \left(\tfrac{1}{\epsilon} \right)\right)$; see \cite{zhang2022lower}.

As mentioned earlier, we calculated the convergence rate by using problem \eqref{bP}. The next proposition states that the bound \eqref{C_GDA} is tight for some class of bilinear saddle point problems.

\begin{proposition}\label{Pro.2}
Let $F\in\mathcal{F}(L_x, L_y, L_{xy}, \mu_x, \mu_y)$. Suppose that $L_x=L_y$ and $\min\{\mu_x, \mu_y\}>0$.   If $t\in\left(0, \tfrac{2\mu}{\mu L+L_{xy}^2}\right)$, then convergence rate \eqref{C_GDA} is exact for one iteration.
\end{proposition}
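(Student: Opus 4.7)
The plan is to exhibit an explicit instance in the bilinear family \eqref{bP} for which one iteration of Algorithm \ref{GDA} attains the bound $\alpha$ in \eqref{C_GDA}. Since Theorem \ref{T_GDA} already gives $\alpha$ as an upper bound valid for the whole class $\mathcal{F}(L_x, L_y, L_{xy}, \mu_x, \mu_y)$, constructing any such instance is enough to prove tightness. Without loss of generality assume $\mu_y \le \mu_x$, so that $\mu = \mu_y$, and take \eqref{bP} with $L_x = L_y = L$ and the given $\mu_x, \mu_y, L_{xy}$. This quadratic function belongs to $\mathcal{F}(L, L, L_{xy}, \mu_x, \mu_y)$ by direct inspection of its $(x,y)$ second-order behaviour, and $(x^\star, y^\star)=(0,0)$ is its unique saddle point.

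Writing $x=(x_1,x_2)$ and $y=(y_1,y_2)$, the off-diagonal term $\begin{pmatrix}0 & L_{xy}\\ L_{xy} & 0\end{pmatrix}$ couples $x_1$ only with $y_2$ and $x_2$ only with $y_1$; combined with the diagonal quadratic parts, this makes one step of Algorithm \ref{GDA} decouple into two independent $2\times 2$ linear recursions. In particular, the $(x_1, y_2)$ coordinates evolve according to
\begin{align*}
\begin{pmatrix} x_1^2 \\ y_2^2 \end{pmatrix} = M_1 \begin{pmatrix} x_1^1 \\ y_2^1 \end{pmatrix}, \qquad
M_1 = \begin{pmatrix} 1-tL & -tL_{xy}\\ tL_{xy} & 1-t\mu \end{pmatrix}.
\end{align*}
If I initialise with $x_2^1 = y_1^1 = 0$ and $(x_1^1, y_2^1)$ equal to a unit eigenvector of $M_1^T M_1$ for its largest eigenvalue, then $x_2^2 = y_1^2 = 0$ automatically, and the ratio in \eqref{C_GDA} becomes exactly $\lambda_{\max}(M_1^T M_1)$.

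It remains to compute this eigenvalue in closed form and match it to $\alpha$. Setting $a=1-tL$, $b=1-t\mu$, $c=tL_{xy}$, one finds $\mathrm{tr}(M_1^T M_1)=a^2+b^2+2c^2$, and since $\det M_1 = ab+c^2$ we have $\det(M_1^T M_1)=(ab+c^2)^2$. The discriminant of the $2\times 2$ characteristic polynomial then factors cleanly as $(a-b)^2\bigl((a+b)^2+4c^2\bigr)$, giving
\begin{align*}
\lambda_{\max}(M_1^T M_1) = \tfrac{a^2+b^2+2c^2}{2}+\tfrac{|a-b|}{2}\sqrt{(a+b)^2+4c^2}.
\end{align*}
Substituting $|a-b|=(L-\mu)t$ and $(a+b)^2+4c^2=(Lt+\mu t-2)^2+4L_{xy}^2 t^2$ reproduces precisely the expression for $\alpha$ in Theorem \ref{T_GDA}, proving the claim when combined with the upper bound already established there. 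The main (and essentially only) nontrivial step is the discriminant factorisation, which is painless thanks to the identity $\det M_1 = ab+c^2$; everything else is direct substitution.
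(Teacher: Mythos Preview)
Your proof is correct and follows essentially the same strategy as the paper: both exhibit the bilinear instance \eqref{bP} and an initial point at which one step of Algorithm \ref{GDA} attains the rate $\alpha$ exactly. The only difference is presentational: the paper writes down explicit coordinates for $(x^1,y^1)$ and leaves the verification to the reader, whereas you identify the decoupled $2\times2$ dynamics, characterise the extremal initial point as a top right singular vector of $M_1$, and compute $\lambda_{\max}(M_1^TM_1)$ in closed form to match $\alpha$. Your route is the natural derivation behind the paper's stated initial point (indeed, the paper's $(x^1_2,y^1_1)$ is precisely such a singular vector, up to the block labelling), so the two arguments are the same in substance.
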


\begin{proof}
To establish the proposition, it suffices to introduce a problem for which Algorithm \ref{GDA} generates $(x^2, y^2)$ with respect to the initial point $(x^1, y^1)$ such that
$$
\|x^2-x^\star\|^2+\|y^2-y^\star\|^2=
\alpha\left(\|x^1-x^\star\|^2+\|y^1-y^\star\|^2\right),
$$
where $\alpha$ is the convergence rate factor given in Theorem \ref{T_GDA}. Consider problem \eqref{bP}. Due to the symmetry of Algorithm \ref{GDA} and the class of problems, we may assume  $\mu_x\geq \mu_y$. Moreover, without loss of generality, we can take  $L_{xy}=1$; see our discussion in the proof of Theorem \ref{T_GDA}. Suppose $L=L_x$, $\mu=\mu_y$ and $\beta=\sqrt{(L t+\mu t-2)^2+4 t^2}$. One can verify that Algorithm \ref{GDA}  with the initial point
\begin{align*}
& x^1_1=0, && x_2^1=\sqrt{\tfrac{2-t\left(L+\mu\right)+\beta}{2\beta}},\\
& y_1^1=-t\sqrt{\tfrac{2}{\beta\left(2-t\left(L+\mu\right)+\beta\right)}},
&& y^1_2=0.
\end{align*}
generates $(x^2, y^2)$ with the desired equality.
\end{proof}

One may wonder why we stress on one iteration in Proposition \ref{Pro.2}. Based on our numerical results if $L_{xy}>0$, under the setting of Theorem \ref{T_GDA}, we observed that
$$
\|x^k-x^\star\|^2+\|y^k-y^\star\|^2<
\alpha^{k-1}\left(\|x^1-x^\star\|^2+\|y^1-y^\star\|^2\right), \ \ \ k\geq 3,
$$
for some $t\in\left(0, \tfrac{2\mu}{\mu L+L_{xy}^2}\right)$. The reason may be related to the fact that the  vector field
$\begin{pmatrix}
\nabla_x F(x,y) & -\nabla_y F(x,y)
\end{pmatrix}^T$
 is not conservative.

It may be of interest whether  inequality \eqref{C_GDA} may hold without strong convexity. By removing strong convexity, the solution set may not be singleton. Hence, we investigate distance to the solution set, that is, if there exists $0\leq \alpha<1$ with
$$
d_{S^\star}^2((x^2, y^2))\leq  \alpha d_{S^\star}^2((x^1, y^1)).
$$
The next proposition says in general the answer is negative. Indeed, it gives an example with $\min\{\mu_x, \mu_y\}=0$ and a unique saddle point  for which
$$
\|x^2-x^\star\|^2+\|y^2-y^\star\|^2\geq
\alpha\left(\|x^1-x^\star\|^2+\|y^1-y^\star\|^2\right),
$$
for some $\alpha\geq 1$, no matter how close $(x^1, y^1)$ is to the unique saddle point and which positive step length $t$ is taken. In the next proposition, we may assume without loss of generality $\mu_x=0$ and make an example analogous to that given in Proposition \ref{Pro.2}.

\begin{proposition}
Let $L, L_{xy},\mu_y, t, r>0$  be given. Then there exists a function $F\in\mathcal{F}(L, L, L_{xy}, 0, \mu_y)$ with the unique saddle point $(x^\star, y^\star)$ and $(x^1, y^1)$ such that, for $(x^2, y^2)$ generated by Algorithm \ref{GDA}, we have
$$
\|x^2-x^\star\|^2+\|y^2-y^\star\|^2\geq
\alpha\left(\|x^1-x^\star\|^2+\|y^1-y^\star\|^2\right),
$$
for some $\alpha\geq 1$ and $\|x^1-x^\star\|^2+\|y^1-y^\star\|^2=r^2$.
\end{proposition}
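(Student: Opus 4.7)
My plan is to exhibit an explicit bilinear saddle-point function with $\mu_x=0$, built in the spirit of problem~\eqref{bP} but with the strong convexity in $x$ dropped, for which one direct GDA step immediately produces a non-contractive bound. Concretely I would take $n=m=1$ and set
$$
F(x,y)=L_{xy}\,xy-\tfrac{\mu_y}{2}\,y^2.
$$
An entirely analogous $\mathbb{R}^2\times\mathbb{R}^2$ construction is obtained by setting $\mu_x=0$ directly in~\eqref{bP}; the check I outline below would then be performed on its invariant $(x_2,y_1)$-block, but the one-dimensional version is notationally cleaner and sufficient.

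The first step is to verify membership in the class. Since $F(\cdot,y)$ is affine, it is convex with modulus $\mu_x=0$ and (vacuously) has gradient-Lipschitz constant $L_x^{\mathrm{actual}}=0\le L$. Since $F(x,\cdot)$ is $\mu_y$-strongly concave and $\mu_y$-smooth, it satisfies the $L_y=L$ condition provided $\mu_y\le L$, which is anyway required for $\mathcal{F}(L,L,L_{xy},0,\mu_y)$ to be non-empty. The mixed partials are exactly $L_{xy}$. Hence $F\in\mathcal{F}(L,L,L_{xy},0,\mu_y)$. Second, the saddle-point equations $L_{xy}y=0$ and $L_{xy}x-\mu_y y=0$ force $(x^\star,y^\star)=(0,0)$, so a unique saddle point exists.

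Third, I would choose the initial iterate $(x^1,y^1)=(r,0)$, which satisfies $\|x^1-x^\star\|^2+\|y^1-y^\star\|^2=r^2$. One iteration of Algorithm~\ref{GDA} then yields
$$
x^2=r-tL_{xy}\cdot 0=r,\qquad y^2=0+tL_{xy}\,r-t\mu_y\cdot 0=tL_{xy}\,r,
$$
so
$$
\|x^2-x^\star\|^2+\|y^2-y^\star\|^2=(1+t^2L_{xy}^2)\,r^2,
$$
and the claimed inequality holds with $\alpha=1+t^2L_{xy}^2\ge 1$ for the given (arbitrary) $t,r>0$.

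There is essentially no serious obstacle once the example is identified; the conceptual content is simply that when $\mu_x=0$, the coupling term $L_{xy}\,xy$ can inflate $y$ away from the saddle point without any restoring contraction on $x$, so the Proposition~\ref{Pro.2}-style geometric contraction is lost. The only subtlety worth flagging is that in the definition of $\mathcal{F}(L_x,L_y,L_{xy},\mu_x,\mu_y)$ the smoothness parameters are upper bounds, so the example is not obliged to saturate $L_x=L$; if one insists on an example that does, one switches to the $\mathbb{R}^2\times\mathbb{R}^2$ analogue of~\eqref{bP} with $\mu_x=0$ and initial point $x^1=(0,r),\ y^1=(0,0)$, whereupon the identical computation on the $(x_2,y_1)$-block yields the same $\alpha=1+t^2L_{xy}^2$.
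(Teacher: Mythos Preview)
Your argument is correct. The one-dimensional example $F(x,y)=L_{xy}xy-\tfrac{\mu_y}{2}y^2$ indeed lies in $\mathcal{F}(L,L,L_{xy},0,\mu_y)$ (using that the smoothness constants are upper bounds and that non-emptiness of the class forces $\mu_y\le L$), has the unique saddle point $(0,0)$, and the single GDA step from $(r,0)$ produces $(r,\,tL_{xy}r)$, giving the non-contractive factor $\alpha=1+t^2L_{xy}^2\ge 1$. Your $\mathbb{R}^2\times\mathbb{R}^2$ variant with $x^1=(0,r)$, $y^1=(0,0)$ also checks out and addresses any concern about saturating $L_x=L$.

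The paper proceeds differently: it keeps the full four-dimensional bilinear template~\eqref{bP} with $\mu_x=0$ and a carefully chosen initial point (the $\mu\to 0$ limit of the point from Proposition~\ref{Pro.2}), obtaining the specific factor
\[
\alpha=1+\tfrac{1}{2}(L^2+2L_{xy}^2)t^2-Lt+\tfrac{1}{2}Lt\sqrt{(Lt-2)^2+4L_{xy}^2t^2},
\]
which is exactly the rate $\alpha(\mu,L,L_{xy},t)$ of Theorem~\ref{T_GDA} evaluated at $\mu=0$. The paper's construction thus does more than establish \emph{some} $\alpha\ge 1$: it pins the failure of contraction to the degeneration of the tight bound~\eqref{C_GDA} as $\mu\to 0$, reinforcing the exactness claim of Proposition~\ref{Pro.2}. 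Your route is more elementary and fully sufficient for the proposition as stated, but it does not make this structural link; for small $t$ the two $\alpha$'s agree to leading order ($1+L_{xy}^2t^2$), so your example is essentially the ``decoupled'' invariant block of the paper's.
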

\begin{proof}
As discussed before,  we may assume  $L_{xy}=1$. Consider the bilinear saddle point problem,
\begin{align*}
\min_{x\in \mathbb{R}^2}\max_{y\in \mathbb{R}^2} F(x, y)=\tfrac{1}{2}x^T\begin{pmatrix}
L & 0\\
0& 0
\end{pmatrix}x+x^T\begin{pmatrix}
    0 & 1 \\
    1 & 0
  \end{pmatrix}y-\tfrac{1}{2}y^T\begin{pmatrix}
L & 0\\
0&\mu_y
\end{pmatrix}y.
\end{align*}
It is seen that $F\in\mathcal{F}(L, L, L_{xy}, 0, \mu_y)$ and the unique saddle point is $(x^\star, y^\star)=(0, 0)$. Suppose that
\begin{align*}
& x^1_1=0, && x_2^1=r\sqrt{\tfrac{2-tL+\beta}{2\beta}},\\
& y_1^1=-rt\sqrt{\tfrac{2}{\beta\left(2-tL+\beta\right)}},
&& y^1_2=0,
\end{align*}
where $\beta=\sqrt{(L t-2)^2+4 t^2}$. One can verify Algorithm \ref{GDA} generates $(x^2, y^2)$ with
$$
\|x^2-x^\star\|^2+\|y^2-y^\star\|^2\geq
\alpha\left(\|x^1-x^\star\|^2+\|y^1-y^\star\|^2\right)=\alpha r^2,
$$
where
$\alpha=1+\tfrac{1}{2}\left(L^2+2\right)t^2-Lt +\tfrac{1}{2}Lt\sqrt{(Lt- 2)^2 + 4t^2}$. By Proposition \ref{Pro.1}, one can infer that $\alpha\geq 1$.
\end{proof}

By Proposition \ref{Pro.2}, one can infer that the convergence rate factor for bilinear saddle point problems may not be improved for one iteration since the given example is a bilinear saddle point problem. Furthermore, the given convergence rate factor is tight whether $L_x=L_y$. As discussed in \cite{wang2020improved},  the function $H(x, y)=F\left(\sqrt[4]{\tfrac{L_y}{L_x}}x, \sqrt[4]{\tfrac{L_x}{L_y}}y\right)$ shares the same smoothness constants with respect to $x$ and $y$, that is, $\nabla_x H(\cdot ,y)$ and $\nabla_y H(x ,\cdot)$ are Lipschitz continuous with the same modulus $\sqrt{L_x L_y}$. However, the gradient methods are not invariant under scaling; see \cite[Chapter 9]{boyd2004convex}. Hence, we may lose the generality of our discussion by assuming this condition.

Based on our numerical results and analysis of problem \eqref{bP}, we propose the following conjecture concerning the convergence rate of Algorithm \ref{GDA} in terms of  $L_x, L_y, L_{xy}, \mu_x, \mu_y$. Due to the symmetry of Algorithm \ref{GDA}, we may assume that $L_x\geq L_y$. Moreover, Proposition \ref{Pro.2} implies that bound \eqref{C_GDA} is tight when $\mu_y\leq\mu_x$. Hence, we need only consider $\mu_y>\mu_x$.

\begin{conjecture}\label{Conj1}
Let $F\in\mathcal{F}(L_x, L_y, L_{xy}, \mu_x, \mu_y)$. Suppose that $\mu_y>\mu_x>0$, $\max\{L_x, L_y\}=L_x$  and
{\footnotesize{
\begin{align*}
  & c=\tfrac{1}{2}(L_y^2+ \mu_x^2)t-(L_y +\mu_x) +\tfrac{1}{2}(L_y - \mu_x)\sqrt{(L_yt+ \mu_xt - 2)^2 + 4L_{xy}^2t^2},\\
  & \bar\mu=\tfrac{c + 2L_x-L_x^2t+L_xL_{xy}^2t^2 - (c+L_x(2- L_xt))\sqrt{1+t(c + tL_{xy}^2)}}{tL_{xy}(c+ tL_{xy}^2+ L_x(2- L_xt))},\\
&\alpha(\mu, L, L_{xy}, t)=1+\tfrac{1}{2}\left(L^2+\mu^2+2L_{xy}^2\right)t^2-(L+\mu)t +\tfrac{1}{2}(L-\mu)t\sqrt{(Lt + \mu t - 2)^2 + 4L_{xy}^2t^2}.
\end{align*}
}}
\begin{enumerate}[a)]
  \item 
  Assume that $\mu_x\mu_y(L_x-L_y)\geq L_{xy}^2\left(\mu_y-\mu_x\right)$ and $t\in\left(0,  \tfrac{2\mu_y}{L_x\mu_y+L_{xy}^2}\right)$.
  \begin{enumerate}[i)]
    \item
    If $\mu_y\leq \bar\mu$, then
    $$
    \|x^2-x^\star\|^2+\|y^2-y^\star\|^2\leq
\alpha(\mu_y, L_x, L_{xy}, t)\left(\|x^1-x^\star\|^2+\|y^1-y^\star\|^2\right).
    $$
    \item
    If $\mu_y\geq \bar\mu$, then
    $$
    \|x^2-x^\star\|^2+\|y^2-y^\star\|^2\leq
\alpha(\mu_x, L_y, L_{xy}, t)\left(\|x^1-x^\star\|^2+\|y^1-y^\star\|^2\right).
    $$

  \end{enumerate}
  \item 
  Assume that $\mu_x\mu_y(L_x-L_y)\leq L_{xy}^2\left(\mu_y-\mu_x\right)$ and $t\in\left(0,  \tfrac{2\mu_x}{L_y\mu_x+L_{xy}^2}\right)$.
\begin{enumerate}[i)]
    \item
    If $\mu_y\leq \bar\mu$, then
    $$
    \|x^2-x^\star\|^2+\|y^2-y^\star\|^2\leq
\alpha(\mu_y, L_x, L_{xy}, t)\left(\|x^1-x^\star\|^2+\|y^1-y^\star\|^2\right).
    $$
    \item
    If $\mu_y\geq \bar\mu$, then
    $$
    \|x^2-x^\star\|^2+\|y^2-y^\star\|^2\leq
\alpha(\mu_x, L_y, L_{xy}, t)\left(\|x^1-x^\star\|^2+\|y^1-y^\star\|^2\right).
    $$

  \end{enumerate}
\end{enumerate}
\end{conjecture}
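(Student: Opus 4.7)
The plan is to extend the performance-estimation argument used in the proof of Theorem \ref{T_GDA} so that all five parameters $L_x, L_y, L_{xy}, \mu_x, \mu_y$ play a role, rather than collapsing to $L=\max\{L_x,L_y\}$ and $\mu=\min\{\mu_x,\mu_y\}$. First I would rewrite problem \eqref{P3} with the finer interpolation conditions: the triples $\{(x^i; G_x^{i,k}; F^{i,k})\}_{i\in\{1,2,\star\}}$ satisfy \eqref{interp.1} with respect to $(\mu_x, L_x)$ for each fixed $k$, while the corresponding $y$-triples satisfy \eqref{interp.2} with respect to $(\mu_y, L_y)$. The cross-Lipschitz bounds involving $L_{xy}$ stay as in \eqref{P3}. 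The resulting SDP has the same structure as before but with split smoothness/strong-convexity constants.

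Next I would use the bilinear problem \eqref{bP} to predict what the worst-case rate should be in each parameter regime. Since \eqref{bP} saturated Theorem \ref{T_GDA}, an analogous $2\times 2$ bilinear model with parameters $(L_x, L_y, L_{xy}, \mu_x, \mu_y)$ should, after diagonalising a $4\times 4$ linear recursion, yield a closed-form spectral radius of the iteration matrix. The two candidate expressions $\alpha(\mu_y, L_x, L_{xy}, t)$ and $\alpha(\mu_x, L_y, L_{xy}, t)$ correspond to the two natural choices of dominant eigendirection, and both the threshold $\mu_y=\bar\mu$ and the condition $\mu_x\mu_y(L_x-L_y)=L_{xy}^2(\mu_y-\mu_x)$ should emerge as the loci where these two candidates cross. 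This step both pins down the correct bound and tells us which dual certificate should close the SDP gap in each case.

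The main construction is then to produce SDP dual multipliers analogous to $\gamma_1,\gamma_2,\gamma_3$ of Theorem \ref{T_GDA}. For Case a(i) with $\mu_y\leq\bar\mu$, I would repeat the dual construction of Theorem \ref{T_GDA} with $(\mu,L)$ replaced by $(\mu_y, L_x)$, retaining only the interpolation inequalities whose constants are active on the worst-case bilinear instance and setting the remaining multipliers to zero. For Case a(ii) one uses $(\mu_x, L_y)$ instead. Cases b(i) and b(ii) are handled analogously; the only change is which cross-Lipschitz constraint is binding and, consequently, which step-length interval is admissible. In every case the target identity should terminate, exactly as in Theorem \ref{T_GDA}, as a sum of explicit squared norms with nonnegative coefficients $\zeta_i$, whence weak duality yields the claimed bound.

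The main obstacle will be algebraic: the dual-certificate identity in Theorem \ref{T_GDA} relies on substantial cancellation, and without the symmetry $\mu_x=\mu_y$, $L_x=L_y$ the bookkeeping roughly doubles. Verifying nonnegativity of each $\zeta_i$ and matching the threshold $\bar\mu$ to the point where both dual schemes simultaneously certify the bound will almost certainly require computer algebra. If a full symbolic certificate proves unwieldy, a cleaner fallback is to first establish the conjecture on the bilinear class \eqref{bP} by computing the spectral radius of the iteration matrix directly, and then lift the bound to all of $\mathcal{F}(L_x,L_y,L_{xy},\mu_x,\mu_y)$ by exhibiting a rank-one extreme solution of \eqref{P2} supported on such a bilinear instance, in line with the authors' numerical evidence.
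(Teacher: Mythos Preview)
The statement you are attempting to prove is labelled a \emph{Conjecture} in the paper, and the authors explicitly write, immediately after stating it, that ``Although we have extensive numerical evidence supporting Conjecture~\ref{Conj1}, we have been unable to prove it.'' There is therefore no proof in the paper to compare your proposal against.

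As for the proposal itself, it is a reasonable research plan rather than a proof. You outline precisely the natural extension of the Theorem~\ref{T_GDA} argument --- refining problem~\eqref{P3} to keep $(\mu_x,L_x)$ and $(\mu_y,L_y)$ separate, guessing the answer from the bilinear family~\eqref{bP}, and then searching for dual multipliers that certify the bound. This is almost certainly what the authors attempted, since it is the template they use for Theorems~\ref{T_GDA} and~\ref{T_GDA_P}, and they were unable to complete it. The gap you identify yourself --- producing nonnegative multipliers $\gamma_i$ and verifying a sum-of-squares identity without the symmetry $L_x=L_y$, $\mu_x=\mu_y$ --- is exactly the unresolved step; your proposal does not supply those multipliers, only asserts they ``should'' exist by analogy. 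Your fallback (prove the rate on the bilinear class and then lift via an extremality argument for \eqref{P2}) is also not a proof as stated: problem~\eqref{P2} is a \emph{relaxation} of~\eqref{P1}, and the paper never establishes that its extreme points are realised by bilinear instances, so showing the bilinear spectral radius equals $\alpha(\cdot)$ would only give a lower bound on the worst case, not the desired upper bound.
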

Although we have extensive numerical evidence supporting Conjecture \ref{Conj1}, we have been unable to prove it.
\section{Linear convergence  without strong convexity}\label{Sec.2}
In this section, we study the linear convergence of Algorithm \ref{GDA} without assuming strong convexity. Indeed, we suppose that $F\in\mathcal{F}\mathcal(L_x, L_y, L_{xy}, 0, 0)$ and we propose some necessary and sufficient conditions for the linear convergence. This subject has received some attention in recent years and some sufficient conditions have been proposed in \cite{du2019linear, zhang2022near} under which  Algorithm \ref{GDA} enjoys local linear convergence rate or it is linearly convergent for bilinear saddle point problems. This topic has been investigated extensively in the context of optimization. The interested reader can refer to \cite{abbaszadehpeivasti2022conditions, bolte2017error, luo1993error, necoara2019linear} and references therein. In this study,  we extend the quadratic gradient growth property introduced in \cite{luo1993error} for saddle point problems.

Recall that we denote the nonempty solution set of problem \eqref{P} by $S^\star$. As we do not assume the strong convexity (concavity), $S^\star$ may not be singleton. Note that $S^\star$ is a closed convex set under our assumptions. Recall that $\Pi_{S^\star} \left((x, y)\right)$ denotes the projection of $(x, y)$ onto $S^\star$.

\begin{definition}
Let $\mu_F>0$. A function $F$ has a quadratic gradient growth  if for any $x\in\mathbb{R}^n$ and $y\in\mathbb{R}^m$,
\begin{align}\label{Qq_r}
\langle \nabla_x F(x,y), x-x^\star\rangle-\langle \nabla_y F(x,y), y-y^\star\rangle \geq \mu_F d_{S^\star}^2((x,y)),
\end{align}
where $(x^\star, y^\star)= \Pi_{S^\star} \left((x, y)\right)$.
\end{definition}

Note that if we set $y=y^\star$ in \eqref{Qq_r}, we have
$$
\langle \nabla_x F(x,y^\star), x-x^\star\rangle\geq  \mu_F \|x-x^\star\|^2.
$$
Hence, $L_x$-smoothness implies that $\mu_F\leq L_x$. Consequently,  due to the symmetry, we have $\mu_F\leq \min\{L_x, L_y\}$. The next proposition states that the quadratic gradient growth condition is weaker than the strong convexity-strong concavity. Indeed, the strong convexity-strong concavity implies the quadratic gradient growth property.

\begin{proposition}\label{Por.3}
Let $F\in\mathcal{F}\mathcal(L_x, L_y, L_{xy}, \mu_x, \mu_y)$. If $\min\{\mu_x, \mu_y\}>0$, then $F$ has a quadratic gradient growth with $\mu_F=\min\{\mu_x, \mu_y\}$.
\end{proposition}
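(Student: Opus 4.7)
The plan is to exploit the fact that strong convex–strong concavity makes the saddle point unique, so $S^\star = \{(x^\star, y^\star)\}$ and $d_{S^\star}^2((x,y)) = \|x-x^\star\|^2 + \|y-y^\star\|^2$, and that at this unique saddle point both partial gradients vanish. The quadratic gradient growth inequality will then follow from combining two strong-convexity-type inequalities applied to the partial functions with a bound for $F(x,y^\star) - F(x^\star,y)$ obtained from the saddle-point conditions.

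First, I would use $\mu_x$-strong convexity of $F(\cdot, y)$ at the pair $(x, x^\star)$ to get
\begin{align*}
\langle \nabla_x F(x, y), x - x^\star \rangle \geq F(x, y) - F(x^\star, y) + \tfrac{\mu_x}{2}\|x - x^\star\|^2,
\end{align*}
and symmetrically $\mu_y$-strong concavity of $F(x, \cdot)$ at $(y, y^\star)$ to get
\begin{align*}
-\langle \nabla_y F(x, y), y - y^\star \rangle \geq F(x, y^\star) - F(x, y) + \tfrac{\mu_y}{2}\|y - y^\star\|^2.
\end{align*}
Adding these two inequalities makes the $F(x,y)$ terms cancel and yields
\begin{align*}
\langle \nabla_x F(x,y), x-x^\star\rangle - \langle \nabla_y F(x,y), y-y^\star\rangle \geq F(x, y^\star) - F(x^\star, y) + \tfrac{\mu_x}{2}\|x-x^\star\|^2 + \tfrac{\mu_y}{2}\|y-y^\star\|^2.
\end{align*}

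Next, I would bound $F(x,y^\star) - F(x^\star,y)$ from below using the saddle-point property. Since $\nabla_x F(x^\star, y^\star) = 0$, strong convexity of $F(\cdot, y^\star)$ gives $F(x, y^\star) \geq F(x^\star, y^\star) + \tfrac{\mu_x}{2}\|x - x^\star\|^2$, while $\nabla_y F(x^\star, y^\star) = 0$ and strong concavity of $F(x^\star, \cdot)$ give $F(x^\star, y) \leq F(x^\star, y^\star) - \tfrac{\mu_y}{2}\|y - y^\star\|^2$. Subtracting produces $F(x, y^\star) - F(x^\star, y) \geq \tfrac{\mu_x}{2}\|x - x^\star\|^2 + \tfrac{\mu_y}{2}\|y - y^\star\|^2$.

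Plugging this into the previous inequality gives
\begin{align*}
\langle \nabla_x F(x,y), x-x^\star\rangle - \langle \nabla_y F(x,y), y-y^\star\rangle \geq \mu_x\|x-x^\star\|^2 + \mu_y\|y-y^\star\|^2,
\end{align*}
and bounding each coefficient below by $\mu_F := \min\{\mu_x, \mu_y\}$ finishes the argument. There is no real obstacle here; the only subtlety is being careful to invoke the vanishing gradient conditions $\nabla_x F(x^\star, y^\star) = 0$ and $\nabla_y F(x^\star, y^\star) = 0$ (which hold because $(x^\star, y^\star)$ is an interior saddle point of a differentiable convex–concave function) when passing from first-order strong convexity/concavity inequalities to the quadratic lower bound on $F(x,y^\star) - F(x^\star, y)$.
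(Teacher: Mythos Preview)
Your proof is correct and takes a genuinely simpler route than the paper. The paper sums four of the full interpolation inequalities from Theorem~\ref{T1} (which carry both the $L$-smoothness and $\mu$-strong convexity information), then verifies an algebraic identity showing that this sum equals a combination of negative squared norms plus the desired quadratic-growth inequality; the negative squares are then discarded. You instead use only the basic first-order strong convexity/concavity inequalities---no smoothness at all---and the cancellation of the $F(x,y)$ and $F(x^\star,y^\star)$ terms happens transparently without any nontrivial identity. Your argument is more elementary and even produces the slightly sharper intermediate bound $\mu_x\|x-x^\star\|^2+\mu_y\|y-y^\star\|^2$ before passing to $\mu_F=\min\{\mu_x,\mu_y\}$. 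The paper's approach is consistent with the performance-estimation machinery used elsewhere, but for this proposition yours is the cleaner proof.
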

\begin{proof}
Under the assumptions, problem \eqref{P} has a unique solution $(x^\star, y^\star)$ and $\nabla_x F(x^\star, y^\star)=0$ and  $\nabla_y F(x^\star, y^\star)=0$. Let $\mu=\min\{\mu_x, \mu_y\}$ and $L=\max\{L_x, L_y\}$. Suppose that  $(x, y)\in\mathbb{R}^n\times\mathbb{R}^m$. By Theorem \ref{T1},  we have
{\small{
\begin{align*}
0\leq & \bigg(F(x^\star,y)-F(x,y)+\left\langle \nabla_x F(x,y),x-x^\star\right\rangle-\tfrac{L}{2\left(L-\mu\right)}\Big(\tfrac{1}{L}\left\|\nabla_x F(x^\star,y)-\nabla_x F(x,y)\right\|^2+\\
  &\mu\left\|x-x^\star\right\|^2-\tfrac{2\mu}{L}\left\langle \nabla_x F(x,y)-\nabla_x F(x^\star,y),x-x^\star\right\rangle\Big)\bigg)+\bigg(F(x,y^\star)-F(x^\star,y^\star)-\\
  &\tfrac{L}{2\left(L-\mu\right)}\Big(\tfrac{1}{L}\left\|\nabla_x F(x,y^\star)\right\|^2+\mu\left\|x-x^\star\right\|^2-\tfrac{2\mu}{L}\left\langle \nabla_x F(x,y^\star),x-x^\star\right\rangle\Big)\bigg)+\bigg(F(x,y)-\\
  &F(x,y^\star)-\left\langle \nabla_y F(x,y),y-y^\star\right\rangle-\tfrac{L}{2\left(L-\mu\right)}\Big(\tfrac{1}{L}\left\|\nabla_y F(x,y^\star)-\nabla_y F(x,y)\right\|^2+\mu\left\|y-y^\star\right\|^2-\\
  &\tfrac{2\mu}{L}\left\langle\nabla_y F(x,y^\star)-\nabla_y F(x,y),y-y^\star\right\rangle\Big)\bigg)+\bigg(F(x^\star,y^\star)-F(x^\star,y)-\tfrac{L}{2\left(L-\mu\right)}\\
  &\Big(\tfrac{1}{L}\left\|\nabla_y F(x^\star,y)\right\|^2+\mu\left\|y-y^\star\right\|^2-\tfrac{2\mu}{L}\left\langle \nabla_y F(x^\star,y),y^\star-y\right\rangle\Big)\bigg)\\
  = & \tfrac{-\mu^2}{L-\mu}\left\|\left(x-x^\star\right)-\tfrac{1}{2\mu}\left(\nabla_x F(x,y)+\nabla_x F(x,y^\star)-\nabla_x F(x^\star,y)\right)\right\|^2-\\
  &\tfrac{1}{4\left(L-\mu\right)}\left\|\nabla_x F(x,y)-\nabla_x F(x,y^\star)-\nabla_x F(x^\star,y)\right\|^2-\\
  & \tfrac{\mu^2}{L-\mu}\left\|\left(y-y^\star\right)+\tfrac{1}{2\mu}\left(\nabla_y F(x,y)-\nabla_y F(x,y^\star)+\nabla_y F(x^\star,y)\right)\right\|^2-\\
  &\tfrac{1}{4\left(L-\mu\right)}\left\|\nabla_y F(x,y)-\nabla_y F(x,y^\star)-\nabla_y F(x^\star,y)\right\|^2-\\
  & \mu\left(\left\|x-x^\star\right\|^2+\left\|y-y^\star\right\|^2\right)+\left\langle \nabla_x F(x,y),x-x^\star\right\rangle-\left\langle \nabla_y F(x,y),y-y^\star\right\rangle.
\end{align*}
}}
Hence,
$$
\mu\left(\left\|x-x^\star\right\|^2+\left\|y-y^\star\right\|^2\right)\leq
\left\langle \nabla_x F(x,y),x-x^\star\right\rangle-\left\langle \nabla_y F(x,y),y-y^\star\right\rangle,
$$
and the proof is complete.
\end{proof}

Note that the converse of Proposition \ref{Por.3} does not hold necessarily. Consider the following saddle point problem
\begin{align}\label{Exe.1}
\min_{x\in \mathbb{R}}\max_{y\in \mathbb{R}} F(x,y):=f(x+y)-2y^2,
\end{align}
where
$$
f(s)=\begin{cases}
  0 & |s|\leq 1 \\
  (s-1)^2 & s>1 \\
  (s+1)^2 & s<-1.
\end{cases}
$$
It is seen that $F$ is not strongly convex-strongly concave and the solution set of problem \eqref{Exe.1} is $\{(x, 0): |x|\leq 1\}$. By doing some algebra, one can check that $F$ has a quadratic gradient growth with $\mu_F=1$ while it is not strongly convex with respect to the first component. For the case that  $F(\cdot, y)$ is  neither strongly convex nor is $F(x, \cdot)$ strongly concave, one may consider uncoupled problem
$\min_{x\in \mathbb{R}}\max_{y\in \mathbb{R}} f(x)-f(y)$.

 In what follows, by using performance estimation, we establish that Algorithm \ref{GDA} enjoys the linear convergence whether $F\in\mathcal{F}\mathcal(L_x, L_y, L_{xy}, 0, 0)$ has a quadratic gradient growth. Without loss of generality, we may assume that
 $(0, 0)=\Pi_{S^\star} \left((x^1, y^1)\right)$. To establish the linear convergence, it suffices to show that
 $$
d_{S^\star}^2((x^2, y^2))\leq \|x^2\|^2+\|y^2\|^2\leq \alpha d_{S^\star}^2((x^1, y^1)),
 $$
 for some $\alpha\in[0, 1)$. Similarly to Section \ref{Sec.1}, we formulate the following optimization problem
\begin{align}\label{P_QQ}
\nonumber   \max & \ \frac{\|x^1-tG_x^{1,1}\|^2+\|y^1+tG_y^{1,1}\|^2}{\|x^1\|^2+\|y^1\|^2}\\
 \nonumber \st   &  \  \{(x^1; G_x^{1,k}; F^{1,k}),(x^1-tG_x^{1,1}; G_x^{2,k}; F^{2,k}),(0;  G_x^{\star,k}; F^{\star,k})\} \textrm{ satisty \eqref{interp.1} for}\\
  \nonumber    &\ \ \ \ \  \textrm{ $k\in\{1,2, \star\}$ w.r.t.}\ \mu_x=0, L_x \\
   \nonumber    &  \  \{(y^1; G_y^{k,1}; F^{k,1}),(y^1+tG_y^{1,1}; G_y^{k,2}; F^{k,2}),(0;  G_y^{k,\star}; F^{k,\star})\}  \ \textrm{satisty \eqref{interp.2} for}\\
     &\ \ \ \ \ \textrm{ $k\in\{1,2, \star, *\}$ w.r.t.}\ \mu_y=0, L_y \\
  \nonumber & \|G_x^{k,i}-G_x^{k,j}\|\leq L_{xy}\|y^i-y^j\|, \ \ \ i,j,k\in\{1, 2, \star\}\\
   \nonumber & \|G_y^{i,k}-G_y^{j,k}\|\leq L_{xy}\|x^i-x^j\|, \ \ \ i,j,k\in\{1, 2, \star\}\\
 \nonumber & \mu_F \left( \|x^1\|^2+\|y^1\|^2\right)\leq \langle G_x^{1,1}, x^1\rangle-\langle G_y^{1,1}, y^1\rangle,\\
\nonumber &   G_x^{\star, \star}=0, \ G_y^{\star, \star}=0.
\end{align}

Note that in the formulation \eqref{P_QQ}, we only use a subset of constraints for the performance estimation. In the next theorem, we prove the linear convergence of Algorithm \ref{GDA} when $F$ has a quadratic gradient growth.

\begin{theorem}\label{T_GDA_P}
Let $F\in\mathcal{F}(L_x, L_y, L_{xy}, 0, 0)$ and $L=\max\{L_x, L_y\}$. Assume that $F$  has a quadratic gradient growth with $\mu_F>0$. If  $t\in\left(0, \tfrac{2 \mu_F}{L \mu_F+2L_{xy} \sqrt{\mu_F (L-\mu_F)}+L_{xy}^2}\right)$, then Algorithm \ref{GDA} generates $(x^2, y^2)$ such that
\begin{align}\label{C_GDA_P}
d_{S^\star}^2((x^2, y^2))\leq  \alpha d_{S^\star}^2((x^1, y^1)),
\end{align}
where
$$
\alpha=t \left(2 tL_{xy} \sqrt{\mu_F (L-\mu_F)}+\mu_F (L t-2)+tL_{xy}^2\right)+1.
$$
\end{theorem}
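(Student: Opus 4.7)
The plan is to adapt the weak-duality / sum-of-squares template used in the proof of Theorem~\ref{T_GDA} to the relaxation~\eqref{P_QQ}, with the quadratic gradient growth condition playing the role that strong convexity-concavity plays there. By translation invariance of Algorithm~\ref{GDA}, I may assume without loss of generality that $(0,0) = \Pi_{S^\star}((x^1,y^1))$. Then $d_{S^\star}^2((x^1,y^1)) = \|x^1\|^2+\|y^1\|^2$, the origin is a saddle point (so $G_x^{\star,\star} = G_y^{\star,\star} = 0$), and $d_{S^\star}^2((x^2,y^2)) \leq \|x^2\|^2+\|y^2\|^2$ because $(0,0)\in S^\star$. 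It therefore suffices to show $\|x^2\|^2+\|y^2\|^2 \leq \alpha(\|x^1\|^2+\|y^1\|^2)$.

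The concrete task is to exhibit nonnegative multipliers for the constraints of~\eqref{P_QQ} whose weighted combination, added to the identity
\[
\|x^2\|^2+\|y^2\|^2 = \|x^1\|^2+\|y^1\|^2 - 2t\bigl[\langle G_x^{1,1},x^1\rangle-\langle G_y^{1,1},y^1\rangle\bigr] + t^2\bigl[\|G_x^{1,1}\|^2+\|G_y^{1,1}\|^2\bigr],
\]
rewrites $\alpha(\|x^1\|^2+\|y^1\|^2) - \|x^2\|^2-\|y^2\|^2$ as a sum of nonnegative terms. The essential constraints to weight are: the $\mu=0$ interpolation inequalities of Theorem~\ref{T1} applied to $F(\cdot,0)\in\mathcal{F}_{0,L_x}$ at $\{x^1,0\}$ and to $-F(0,\cdot)\in\mathcal{F}_{0,L_y}$ at $\{y^1,0\}$, which reduce to the co-coercivities $\|G_x^{1,\star}\|^2 \leq L_x\langle G_x^{1,\star},x^1\rangle$ and $\|G_y^{\star,1}\|^2 \leq -L_y\langle G_y^{\star,1},y^1\rangle$; the cross Lipschitz bounds $\|G_x^{1,1}-G_x^{1,\star}\|^2 \leq L_{xy}^2\|y^1\|^2$ and $\|G_y^{1,1}-G_y^{\star,1}\|^2\leq L_{xy}^2\|x^1\|^2$; and the QGG inequality $\mu_F(\|x^1\|^2+\|y^1\|^2) \leq \langle G_x^{1,1},x^1\rangle-\langle G_y^{1,1},y^1\rangle$. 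Using the splitting $G_x^{1,1} = G_x^{1,\star} + (G_x^{1,1}-G_x^{1,\star})$ (and the $y$-analogue), the QGG multiplier should be chosen of order $t$ to absorb the linear term $-2t[\langle G_x^{1,1},x^1\rangle - \langle G_y^{1,1},y^1\rangle]$ into $-2t\mu_F(\|x^1\|^2+\|y^1\|^2)$, while co-coercivity and cross Lipschitz multipliers at order $t^2$ must be chosen so that the $t^2$ gradient-norm term produces the target coefficient $L\mu_F + 2L_{xy}\sqrt{\mu_F(L-\mu_F)} + L_{xy}^2$.

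The main obstacle is producing the square-root coefficient $2L_{xy}\sqrt{\mu_F(L-\mu_F)}$. I expect it to come from an AM-GM balancing step $\eta c + d/\eta \geq 2\sqrt{cd}$ applied to a cross term coupling the co-coercivity slack (which yields $\|G_x^{1,\star} - \tfrac{L}{2}x^1\|\leq \tfrac{L}{2}\|x^1\|$) with the Lipschitz-cross remainder $\|G_x^{1,1}-G_x^{1,\star}\|\leq L_{xy}\|y^1\|$ (and symmetrically on the $y$-side), the optimal weight $\eta$ being of order $\sqrt{\mu_F/(L-\mu_F)}$. Once the correct multipliers are identified, the proof reduces to the mechanical polynomial identity verification used at the core of Theorem~\ref{T_GDA}; the declared step-length interval is precisely the set of $t$ for which the resulting $\alpha$ lies in $[0,1)$, ensuring strict contraction.
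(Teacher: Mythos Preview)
Your high-level framework matches the paper exactly: translate so that $(0,0)=\Pi_{S^\star}((x^1,y^1))$, upper-bound $d_{S^\star}^2((x^2,y^2))$ by $\|x^2\|^2+\|y^2\|^2$, normalize $L_{xy}=1$, and then certify the bound on the ratio in~\eqref{P_QQ} by a weak-duality/SOS identity with explicit nonnegative multipliers. The paper does precisely this, and even the intuition that the QGG multiplier is $2t+O(t^2)$ is correct (the paper's $\gamma_3$ has exactly this form).

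However, your list of ``essential constraints'' is too short, and this is a genuine gap. You use only the interpolation inequalities for the two slices $F(\cdot,y^\star)$ and $-F(x^\star,\cdot)$ (yielding the co-coercivities of $G_x^{1,\star}$ and $G_y^{\star,1}$), together with the two cross-Lipschitz bounds on $G_x^{1,1}-G_x^{1,\star}$ and $G_y^{1,1}-G_y^{\star,1}$. The paper's certificate uses, in addition, the interpolation inequalities for the slices $F(\cdot,y^1)$ and $-F(x^1,\cdot)$ (which bring in the gradients $G_x^{\star,1}$ and $G_y^{1,\star}$), \emph{and} the cross-Lipschitz bounds $\|G_x^{\star,1}\|\le L_{xy}\|y^1\|$, $\|G_y^{1,\star}\|\le L_{xy}\|x^1\|$. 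All four interpolation pairs are weighted with \emph{distinct} multipliers $\gamma_1\neq\gamma_2$, so they do not collapse to the two co-coercivities you wrote; and the resulting SOS terms
\[
-\zeta_1\bigl\|x^1+\zeta_2 G_x^{1,1}-\zeta_3(G_x^{1,\star}-G_x^{\star,1})\bigr\|^2-\zeta_4\bigl\|G_x^{1,1}-G_x^{1,\star}-G_x^{\star,1}\bigr\|^2
\]
(and the $y$-analogues) depend on $G_x^{\star,1}$, $G_y^{1,\star}$ in an essential way. With only your five constraints, the splitting $G_x^{1,1}=G_x^{1,\star}+(G_x^{1,1}-G_x^{1,\star})$ leaves an uncontrolled cross term $\langle G_x^{1,1}-G_x^{1,\star},\,x^1\rangle$, and the crude bounds $\|G_x^{1,\star}\|\le L\|x^1\|$, $\|G_x^{1,1}-G_x^{1,\star}\|\le L_{xy}\|y^1\|$ lead at best to a $t^2(L+L_{xy})^2$ coefficient rather than the sharper $t^2\bigl(L\mu_F+2L_{xy}\sqrt{\mu_F(L-\mu_F)}+L_{xy}^2\bigr)$ claimed. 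The square-root term in $\alpha$ does not come from an AM--GM balancing step applied to your constraints; it emerges from the explicit multipliers $\gamma_1,\gamma_2,\gamma_4$ (each involving $\sqrt{\mu_F(L-\mu_F)}$) once the full constraint set is used. To complete the proof you should include the four additional inequalities above and then verify the algebraic identity as in Theorem~\ref{T_GDA}.
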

\begin{proof}
The argument is similar to that of Theorem \ref{T_GDA}. It is seen that for any step length $t$ in the given interval, $\alpha\in [0, 1)$. We may assume without loss of generality $L_{xy}=1$. By the assumptions,
$F(\cdot, y)\in\mathcal{F}_{0,L}(\mathbb{R}^n)$ and $F(x, \cdot)\in\mathcal{F}_{0,L}(\mathbb{R}^m)$ for any fixed $x, y$. Suppose that
 \begin{align*}
  &\bar\alpha=t \left(2 t \sqrt{\mu_F (L-\mu_F)}+\mu_F (L t-2)+t\right)+1, &&\beta=t^2\left(\mu_F\sqrt{L-\mu_F}+\sqrt{\mu_F}\right),\\
  &\gamma_1=t^2 \left(\tfrac{\mu_F}{\sqrt{\mu_F (L-\mu_F)}}+\mu_F\right), &&\gamma_2=\tfrac{t^2 \left(\mu_F (L-\mu_F)+\sqrt{\mu_F (L-\mu_F)}\right)}{\mu_F},\\
   &\gamma_3=-\tfrac{t^2 \left(\mu_F (L+\mu_F)+\sqrt{\mu_F (L-\mu_F)}\right)}{\mu_F}+\tfrac{\beta}{\sqrt{L-\mu_F}}+2 t, && \gamma_4=\tfrac{1}{2} t^2 \left(\sqrt{\mu_F (L-\mu_F)}+1\right).
\end{align*}
One may readily verify that $\gamma_1,\gamma_2,\gamma_3,\gamma_4\geq 0$. By doing some algebra, one can show that
{\small{
\begin{align*}
& \left\|x^1-tG_x^{1,1}\right\|^2+\left\|y^1+tG_y^{1,1}\right\|^2-
\bar\alpha\left(\left\|x^1\right\|^2+\left\|y^1\right\|^2\right)+\gamma_1\bigg(F^{1,1}-F^{\star,1}-\left\langle G_x^{\star,1},x^1\right\rangle-\\
  &\tfrac{1}{2L}\left\|G_x^{1,1}-G_x^{\star,1}\right\|^2\bigg)+\gamma_2\bigg(F^{\star,1}-F^{1,1}+\left\langle G_x^{1,1},x^1\right\rangle-\tfrac{1}{2L}\left\|G_x^{\star,1}-G_x^{1,1}\right\|^2\bigg)+\gamma_2\bigg(F^{1,\star}-\\
  & F^{\star,\star}-\tfrac{1}{2L}\left\|G_x^{1,\star}\right\|^2\bigg)+\gamma_1\bigg(F^{\star,\star}-F^{1,\star}+\left\langle G_x^{1,\star},x^1\right\rangle-\tfrac{1}{2L}\left\|G_x^{1,\star}\right\|^2\bigg)+\gamma_1\bigg(F^{1,\star}-F^{1,1}+\\
  &\left\langle G_y^{1,\star},y^1\right\rangle-\tfrac{1}{2L}\left\|G_y^{1,1}-G_y^{1,\star}\right\|^2\bigg)+\gamma_2\bigg(F^{1,1}-F^{1,\star}-\left\langle G_y^{1,1},y^1\right\rangle-\tfrac{1}{2L}\left\|G_y^{1,\star}-G_y^{1,1}\right\|^2\bigg)+\\
  &\gamma_2\bigg(-F^{\star,1}+F^{\star,\star}-\tfrac{1}{2L}\left\|G_y^{\star,1}\right\|^2\bigg)+\gamma_1\bigg(-F^{\star,\star}+F^{\star,1}+\left\langle G_y^{\star,1},-y^1\right\rangle-\tfrac{1}{2L}\left\|G_y^{\star,1}\right\|^2\bigg)+\\
  & \gamma_3\bigg(\left\langle G_x^{1,1},x^1\right\rangle-\left\langle G_y^{1,1},y^1\right\rangle-\mu_F\Big(\left\|x^1\right\|^2+\left\|y^1\right\|^2\Big)\bigg)+\gamma_4\left(\left\|x^1\right\|^2-\left\|G_y^{1,1}-G_y^{\star,1}\right\|^2\right)+\\
  & \gamma_4\left(\left\|x^1\right\|^2-\left\|G_y^{1,\star}\right\|^2\right)+\gamma_4\left(\left\|y^1\right\|^2-\left\|G_x^{1,1}-G_x^{1,\star}\right\|^2\right)+\gamma_4\left(\left\|y^1\right\|^2-\left\|G_x^{\star,1}\right\|^2\right)\\
 = & -\zeta_1\left\|x^1+\zeta_2 G_x^{1,1}-\zeta_3(G_x^{1,\star}-G_x^{\star,1})\right\|^2-\zeta_4\left\|G_x^{1,1}-G_x^{1,\star}-G_x^{\star,1}\right\|^2-\\
  & \zeta_1\left\|y^1-\zeta_2 G_y^{1,1}-\zeta_3(G_y^{1,\star}-G_y^{\star,1})\right\|^2-\zeta_4\left\|G_y^{1,1}-G_y^{\star,1}-G_y^{1,\star}\right\|^2\leq 0,
\end{align*}
}}
where the multipliers $\zeta_1, \zeta_2, \zeta_3, \zeta_4$ are given as follows
\begin{align*}
  & \zeta_1=\mu_F \left(\tfrac{\beta}{\sqrt{L-\mu_F}}-\mu_F t^2\right),\
  \zeta_2=\tfrac{\beta}{2 \mu_F\sqrt{\mu_F}t^2}-\tfrac{1}{\mu_F}, \
   \zeta_3=\tfrac{t^2 \left(\sqrt{\mu_F (L-\mu_F)}+1\right)}{2 \mu_F t^2}, \\
  & \zeta_4=\tfrac{1}{4} \left(\tfrac{2 t^2 (\mu_F (L-\mu_F)+1)}{\sqrt{\mu_F (L-\mu_F)}}-\tfrac{\left(2 \mu_F t^2 (\mu_F-L)+\beta\sqrt{L-\mu_F}\right)^2}{\mu_F (L-\mu_F)t^2 \sqrt{\mu_F\left(L-\mu_F\right)}}\right).
\end{align*}
One can show by some algebra that $\zeta_1, \zeta_4\geq 0$. Hence, for any feasible solution of problem \eqref{P_QQ}, we have
\begin{align*}
   \frac{\left\|x^1-tG_x^{1,1}\right\|^2+\left\|y^1+tG_y^{1,1}\right\|^2}{\left\|x^1\right\|^2+\left\|y^1\right\|^2}\leq \bar\alpha,
\end{align*}
and the proof is complete.
\end{proof}

We obtained the  linear convergence by using quadratic gradient growth in Theorem \ref{T_GDA_P}. The next theorem states that quadratic gradient growth property is also a sufficient condition for the linear convergence.

\begin{theorem}
If Algorithm \ref{GDA} is linearly convergent for any initial point, then $F$ has a quadratic gradient growth for some $\mu_F>0$.
\end{theorem}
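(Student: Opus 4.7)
The plan is to extract a quadratic gradient growth constant $\mu_F=(1-\alpha)/(2t)$ directly from the one-step contraction $d_{S^\star}^2(x^2,y^2)\le \alpha\,d_{S^\star}^2(x^1,y^1)$ provided by linear convergence. I would fix an arbitrary $(x^1,y^1)\notin S^\star$, set $(x^\star,y^\star):=\Pi_{S^\star}(x^1,y^1)$, apply one step of Algorithm \ref{GDA} to obtain $(x^2,y^2)$, and introduce the auxiliary point $(\hat x,\hat y):=\Pi_{S^\star}(x^2,y^2)$. Abbreviate
\begin{align*}
A&:=\langle \nabla_xF(x^1,y^1),x^1-x^\star\rangle-\langle \nabla_yF(x^1,y^1),y^1-y^\star\rangle,\\
\bar A&:=\langle \nabla_xF(x^1,y^1),x^1-\hat x\rangle-\langle \nabla_yF(x^1,y^1),y^1-\hat y\rangle,\\
B&:=\|\nabla_xF(x^1,y^1)\|^2+\|\nabla_yF(x^1,y^1)\|^2;
\end{align*}
the target inequality is $A\ge \mu_F\,d_{S^\star}^2(x^1,y^1)$.

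The first step is to expand $d_{S^\star}^2(x^2,y^2)=\|x^2-\hat x\|^2+\|y^2-\hat y\|^2$ through the GDA update, which produces the identity
\[
d_{S^\star}^2(x^2,y^2)=\|(x^1,y^1)-(\hat x,\hat y)\|^2-2t\bar A+t^2B.
\]
Since $S^\star$ is closed and convex, the projection Pythagorean inequality $\|(x^1,y^1)-(\hat x,\hat y)\|^2\ge d_{S^\star}^2(x^1,y^1)+c^2$ holds with $c:=\|(x^\star,y^\star)-(\hat x,\hat y)\|$; combined with $d_{S^\star}^2(x^2,y^2)\le \alpha\,d_{S^\star}^2(x^1,y^1)$, this yields
\[
2t\bar A\ge (1-\alpha)\,d_{S^\star}^2(x^1,y^1)+c^2+t^2B.
\]

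The second step is to trade $\bar A$ for $A$. Writing $\phi:=(\nabla_xF,-\nabla_yF)$, a direct computation gives $\bar A-A=\langle \phi(x^1,y^1),(x^\star,y^\star)-(\hat x,\hat y)\rangle$, so Cauchy--Schwarz combined with the Young-type bound $\sqrt{B}\,c\le (tB+c^2/t)/2$ yields $2t(\bar A-A)\le t^2B+c^2$. Subtracting this from the previous display makes the $c^2$ and $t^2B$ contributions cancel exactly, leaving $2tA\ge (1-\alpha)\,d_{S^\star}^2(x^1,y^1)$, so quadratic gradient growth holds with $\mu_F=(1-\alpha)/(2t)>0$.

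The main conceptual hurdle is choosing the expansion point in the first step: expanding $d_{S^\star}^2(x^2,y^2)$ around $(\hat x,\hat y)$ rather than around $(x^\star,y^\star)$ is what forces the projection Pythagorean inequality to generate exactly the two slack terms $c^2$ and $t^2B$ that Young's inequality later consumes. A naive variant that keeps the same projection throughout only yields one-sided bounds that fail to combine into a lower bound on $A$.
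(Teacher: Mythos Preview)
Your argument is correct and shares the paper's overall strategy—expand one GDA step and extract $\mu_F=(1-\alpha)/(2t)$ from the one-step contraction—but the execution differs in one respect worth recording. The paper takes $(x^\star,y^\star):=\Pi_{S^\star}(x^2,y^2)$, i.e.\ the projection of the \emph{next} iterate, so that $d_{S^\star}^2(x^2,y^2)=\|(x^2,y^2)-(x^\star,y^\star)\|^2$ exactly while $d_{S^\star}^2(x^1,y^1)\le\|(x^1,y^1)-(x^\star,y^\star)\|^2$; expanding the update then yields the bound in two lines. This is shorter, but the $(x^\star,y^\star)$ appearing in the final inequality is not literally $\Pi_{S^\star}(x^1,y^1)$, which is what the stated definition of quadratic gradient growth calls for. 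You instead track both projections $(x^\star,y^\star)=\Pi_{S^\star}(x^1,y^1)$ and $(\hat x,\hat y)=\Pi_{S^\star}(x^2,y^2)$, use the projection Pythagorean inequality to compare them, and spend a Cauchy--Schwarz/Young step to convert $\bar A$ into $A$ with the slack terms $c^2$ and $t^2B$ cancelling exactly. The extra bookkeeping buys you the inequality in precisely the form the definition requires, so your version is marginally more careful even though both arrive at the same constant $\mu_F=(1-\alpha)/(2t)$.
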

\begin{proof}
  Let $(x^1, y^1)\in\mathbb{R}^n\times\mathbb{R}^m$ and $(x^2, y^2)$ be generated by Algorithm \ref{GDA}.  Suppose that $(x^\star, y^\star)=\Pi_{S^\star} \left((x^2, y^2\right))$.
  As Algorithm \ref{GDA} is linearly convergent, there exist $\alpha\in [0,1)$ with
  \begin{align}\label{ccc}
  d_{S^\star}^2((x^2, y^2))\leq \alpha  d_{S^\star}^2((x^1, y^1))\leq \alpha\left(\|x^1-x^\star\|^2+\|y^1-y^\star\|^2\right).
  \end{align}
 By setting $x^2=x^1-t\nabla_x F(x^1, y^1)$ and $y^2=y^1+t\nabla_y F(x^1, y^1)$ in inequality \eqref{ccc}, we get
 $$
 \tfrac{1-\alpha}{2t}\left( \|x^1-x^\star\|^2+\|y^1-y^\star\|^2\right)\leq \left\langle \nabla_x F(x^1,y^1),x^1-x^\star\right\rangle-\left\langle \nabla_y F(x^1,y^1),y^1-y^\star\right\rangle,
 $$
 which implies that
 $$
 \mu_F d_{S^\star}^2(x^1, y^1)\leq \left\langle \nabla_x F(x^1,y^1),x^1-x^\star\right\rangle-\left\langle \nabla_y F(x^1,y^1),y^1-y^\star\right\rangle,
 $$
 for $\mu_F=\tfrac{1-\alpha}{2t}$ and the proof is complete.
\end{proof}

\section*{Concluding remarks}
In this study, we provided a new convergence rate for the gradient descent-ascent   method for saddle point problems. Furthermore, we gave some necessary and sufficient conditions for the linear convergence without strong convexity. We employed performance estimation method for proving the results.  For future work, it would be interesting to consider the case where the variables $x$ and $y$ in the saddle point problem are constrained to lie in given, compact convex sets, since many saddle point problems fall in this category. In this case, one could use the performance estimation framework to analyze other methods, e.g. proximal type algorithms.

%
%

\bibliographystyle{siamplain}      
\bibliography{references}
\end{document}